\documentclass[10pt,reqno]{amsart}
\usepackage[utf8]{inputenc}
\usepackage[english]{babel}
\usepackage{amsmath, amssymb, amsthm}
\usepackage{mathrsfs}
\usepackage{enumitem}
\usepackage{graphicx}
\usepackage{tikz-cd}
\usepackage{xcolor}
\numberwithin{equation}{section} 

\usepackage{diagbox} 
\usepackage{tensor}  


\usepackage{hyperref} 

\usepackage[alphabetic]{amsrefs} 


\theoremstyle{plain}
\newtheorem{thm}{Theorem}[section]
\newtheorem{prop}[thm]{Proposition}
\newtheorem{lemma}[thm]{Lemma}

\newtheorem{quest}{Question}
\newtheorem{cor}[thm]{Corollary}

\theoremstyle{definition}
\newtheorem{defin}[thm]{Definition}

\newtheorem{rmk}[thm]{Remark}

\allowdisplaybreaks


\newcommand{\R}{\mathbb{R}}

\newcommand{\eps}{\varepsilon}


\DeclareMathOperator{\tr}{tr}


\def\XXint#1#2#3{{\setbox0=\hbox{$#1{#2#3}{\int}$ }
\vcenter{\hbox{$#2#3$ }}\kern-.6\wd0}}

\usepackage{scalerel,stackengine}
\stackMath
\newcommand\hhat[1]{%
\savestack{\tmpbox}{\stretchto{%
  \scaleto{%
    \scalerel*[\widthof{\ensuremath{#1}}]{\kern.1pt\mathchar"0362\kern.1pt}%
    {\rule{0ex}{\textheight}}
  }{\textheight}%
}{2.4ex}}%
\stackon[-6.9pt]{#1}{\tmpbox}%
}


\title[A Calder\'on's problem for harmonic maps]{A Calder\'on's problem for harmonic maps}
\author{Sebastián Muñoz-Thon}

\address{Department of Mathematics, Purdue University, West Lafayette, IN 47907.}
\email{smunozth@purdue.edu}

\begin{document}

\begin{abstract}
We study a version of Calderón's problem for harmonic maps between Riemannian manifolds. By using the higher linearization method, we first show that the Dirichlet-to-Neumann map determines the metric on the domain up to a natural gauge in three cases: on surfaces, on analytic manifolds, and in conformally transversally anisotropic manifolds on a fixed conformal class with injective ray transform on the transversal manifold. Next, using higher linearizations we obtain integral identities that allows us to show that the metrics on the target have the same jets at one point. In particular, if the target is analytic, the metrics are equal. We also prove an energy rigidity result, in the sense that the Dirichlet energies of harmonic maps determines the Dirichlet-to-Neumann map. 
\end{abstract}

\maketitle

\section{Introduction}

Calderón's problem \cite{calderon} asks whether one can determine the electrical conductivity of a medium by making voltage and current measurements at the boundary of the medium. This inverse method is known as Electrical Impedance Tomography (EIT). We refer to the survey \cite{gunther14} and the references therein. The generalization to Riemannian manifolds is called the \emph{anisotropic Calder\'on problem}. One considers a Riemannian manifold with boundary $(M,g)$, and from the Dirichlet-to-Neumann map $\Lambda_{g} \colon f \mapsto \partial_{\eta}u|_{\partial M}$, where $u$ solves 
\[ \Delta_{g}u=0 \text{ in }M, \qquad u|_{\partial M}=f, \]
one tries to recover the Riemannian metric $g$. Here $\Delta_{g}$ denotes the Laplace--Beltrami operator associated to $g$, and $\partial_{\eta}$ the normal derivative on $(M,g)$.

In this work we consider a Calder\'on's like problem for harmonic maps. Given $(M,g)$ and $(N,h)$ Riemannian manifolds with boundary, a map $u \in C^{\infty}(M,N)$ is \emph{harmonic} if and only if it satisfies the following systems of partial differential equations (PDE)
\begin{equation} \label{eq:hm}
    \Delta_g u^i+g^{\alpha \beta} \Gamma_{j k}^i(u) \frac{\partial u^j}{\partial x_\alpha} \frac{\partial u^k}{\partial x_\beta}=0 \quad 1 \leq i \leq \dim N.
\end{equation}
Here $\Gamma$ are the Christoffel's symbols of $(N,h)$. In invariant terms, this just means that the tension field of $u$ vanishes, that is, $\tr_{g}\nabla du=0$, where $\nabla$ denotes the connection on $T^{*}M \otimes u^{*}TN$. We deal with the following problem.

\begin{quest} \label{quest:1}
    To what extent, the Dirichlet to Neumann map
    \begin{equation} \label{eq:dn}
        \Lambda(u|_{\partial M}):=\partial_{\eta}u|_{\partial M},
    \end{equation}
    where $u$ is a solution of \eqref{eq:hm} determine $g$ and $h$?
\end{quest}

Here the Dirichlet-to-Neumann map is for solutions $u \colon (M,g) \to (N,h)$ of the harmonic map system of PDE. To have a well-defined map, we have to impose geometric conditions of $N$, and we have to choose a particular solution to \eqref{eq:hm}. We explain the details in Remark \ref{rmk:comm_main} and in Section \ref{sec:hol}.

For the domain, one should expect the usual gauge invariance, that is, that \eqref{eq:dn} determines the metric up to a diffeomorphism fixing the boundary in dimension 3 or higher, and up to a diffeomorphism and conformal class in surfaces. Regarding the target, we see that \eqref{eq:hm} presents the following gauge invariance: any metric with the same Levi-Civita connection as $h$ would give the same equation. We point out that this last condition does not imply that the metrics are homothetic, e.g., take  $(N_{j},h_{j})$ two compact Riemannian manifolds, and consider their Riemannian product $N:=(N_{1} \times N_{2},h_{1}\oplus h_{2})$. Then, for any positive constants $c_{1},c_{2}$, the family of manifolds $(N_{1} \times N_{2},c_{1}h_{1}\oplus c_{2}h_{2})$ have the same Christoffel's symbols as $N$ (but their metrics are not homothetic). However, in some cases we obtain a positive answer, for example, if the metrics are in the same conformal class, or if they are Einstein, or if its (Riemannian) holonomy group is irreducible (for example, in negative curvature). 

We obtain the following result partial answer to Question \ref{quest:1}. 

\begin{thm} \label{thm:main}
Let $M$ be a smooth compact manifold with smooth boundary, endowed with the Riemannian metrics $g,\hat{g}$. Let $\Omega \subset \R^{n}$ be a domain with Riemannian metrics $h,\hat{h}$ so that the sectional curvature of $N$ is non-positive, and that $\partial \Omega$ is strictly convex, both conditions with respect to $h$ and $h$. Let $r>0$. Assume that
\begin{equation} \label{eq:dir_1}
    \Lambda_{g,h}=\Lambda_{\hat{g},\hat{h}}  \qquad \text{on } C^{\infty}(\partial M,B_{r}(q)),
\end{equation}
where $q \in N$ and $B_{r}(q)$ denotes a ball on $\R^{n}$ with the Euclidean metric. Then we have the following:
\begin{enumerate}
    \item If $\dim M=2$, then there exists a diffeomorphism $F\colon M \to M$ such that $F|_{\partial M}=Id_{\partial M}$, and a positive function $c \in C^{\infty}(M)$ with $c|_{\partial M} \equiv 1$, such that
    \begin{equation} \label{eq:rel_metrics_2}
       F^{*}\hat{g}=cg. 
    \end{equation}
    \item If $\dim M \geq 3$, $(M,g)$ and $(M,\hat{g})$ are real-analytic up to the boundary, then there exists $F$ as before with 
    \begin{equation} \label{eq:rel_metrics_an}
       F^{*}\hat{g}=g. 
    \end{equation}
    \item If $\dim M \geq 3$, $(M,g)$ and $(M,\hat{g})$ are CTA manifolds in the same conformal class, with injective geodesic ray transform on the transversal manifold, then 
    \begin{equation} \label{eq:rel_metrics_cta}
       \hat{g}=g. 
    \end{equation}
\end{enumerate}
Furthermore, in either of the above cases, if $h(q)=\hat{h}(q)$, then the $C^{\infty}$ jets of $h$ and $\hat{h}$ coincide at $q$. 
\end{thm}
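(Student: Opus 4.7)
The strategy is to reduce first to the case of equal domain metrics by invoking parts (1)--(3), then to use a higher-order linearization of \eqref{eq:hm} around the constant solution $u_{0} \equiv q$ to recover the jets of the Christoffel symbols of $h$ and $\hat{h}$ at $q$. Applying the relevant case, we obtain a diffeomorphism $F\colon M \to M$ with $F|_{\partial M} = \mathrm{Id}$ such that $F^{*}\hat{g}$ equals (in cases (2), (3)) or is conformal to (in case (1)) $g$; pulling back by $F$ and exploiting conformal invariance of harmonic maps on surfaces when needed in case (1), we henceforth assume $g = \hat{g}$, while the hypotheses $h(q) = \hat{h}(q)$ and $\Lambda_{g,h} = \Lambda_{g,\hat{h}}$ on $C^{\infty}(\partial M, B_{r}(q))$ are preserved. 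Let $\Gamma$ and $\widehat{\Gamma}$ denote the Christoffel symbols of $h$ and $\hat{h}$, respectively.

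\emph{Higher linearization.} For boundary data $\phi_{\eps} = \sum_{j=1}^{K} \eps_{j} f_{j}$ with $f_{j} \in C^{\infty}(\partial M, T_{q}N)$ of small norm, the hypotheses on $h, \hat{h}$ (non-positive sectional curvature, strictly convex $\partial\Omega$) yield, via an implicit function theorem argument, unique harmonic maps $u^{h}_{\eps}, u^{\hat{h}}_{\eps}$ near $u_{0}$, depending smoothly on $\eps = (\eps_{1},\ldots,\eps_{K})$. Set
\[
v^{h}_{j_{1} \cdots j_{k}} := \partial_{\eps_{j_{1}}} \cdots \partial_{\eps_{j_{k}}} u^{h}_{\eps}\big|_{\eps = 0},
\]
and similarly for $\hat{h}$. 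Since $du_{0} = 0$, differentiating \eqref{eq:hm} once gives $\Delta_{g} v^{h}_{j} = 0$, and the common Dirichlet data forces $v^{h}_{j} = v^{\hat{h}}_{j} =: v_{j}$, which is harmonic on $(M,g)$ componentwise. For $k \geq 2$, differentiating \eqref{eq:hm} to order $k$ and comparing the two targets, the difference $w_{k} := v^{h}_{j_{1} \cdots j_{k}} - v^{\hat{h}}_{j_{1} \cdots j_{k}}$ solves
\[
\Delta_{g} w_{k}^{i} = \sum_{|\beta| \leq k-2} \bigl[\partial^{\beta}\Gamma^{i}_{ab}(q) - \partial^{\beta}\widehat{\Gamma}^{i}_{ab}(q)\bigr]\, Q^{i,a,b}_{\beta}(v_{j_{1}}, \ldots, v_{j_{k}}),
\]
with universal polynomial sources $Q^{i,a,b}_{\beta}$ in the $v_{j_{l}}$ and their first derivatives (involving contractions with $g^{-1}$). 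Differentiating $\Lambda_{g,h} = \Lambda_{g,\hat{h}}$ to order $k$ in $\eps$ then forces $w_{k}$ to have vanishing Cauchy data on $\partial M$.

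\emph{Inductive recovery of jets.} We argue by induction on $k$. Assuming the jets of $\Gamma$ and $\widehat{\Gamma}$ at $q$ agree up to order $k-3$, the source for $w_{k}$ collapses to its $|\beta| = k-2$ summands. Testing against an arbitrary $(M,g)$-harmonic function $\psi$ and applying Green's identity (with vanishing boundary contribution thanks to $w_{k}|_{\partial M} = \partial_{\eta} w_{k}|_{\partial M} = 0$), we obtain, for each $i$,
\[
\sum_{|\beta| = k-2} \bigl[\partial^{\beta}\Gamma^{i}_{ab}(q) - \partial^{\beta}\widehat{\Gamma}^{i}_{ab}(q)\bigr] \int_{M} \psi\, Q^{i,a,b}_{\beta}\, d\vol_{g} = 0.
\]
Varying $\psi$ and the $v_{j_{l}}$ among harmonic functions on $(M,g)$ and invoking density of products of harmonic functions in each setting---holomorphic/antiholomorphic factorization in (1), Runge approximation in (2), and CGO solutions combined with the injective ray transform in (3)---forces each coefficient to vanish. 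The induction gives equality of all jets of $\Gamma$ and $\widehat{\Gamma}$ at $q$; combined with $h(q) = \hat{h}(q)$ and the classical formula for $\Gamma$ in terms of $\partial h$ and $h^{-1}$, this inductively yields equality of all jets of $h$ and $\hat{h}$ at $q$. The principal obstacle is precisely this density step: one must exhibit enough independent tuples $(\psi, v_{j_{1}}, \ldots, v_{j_{k}})$ of harmonic functions so that, against the combinatorially-rich $Q^{i,a,b}_{\beta}$, the integrals separate every component of the symmetric tensor $\partial^{\beta}\Gamma(q)$ at each order $|\beta|=k-2$, a program that must be carried out differently in each of the three geometric categories.
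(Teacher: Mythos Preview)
Your overall architecture matches the paper: the first linearization decouples into scalar harmonic equations, so the linearized DN map is the standard one for $\Delta_g$, and Lassas--Uhlmann (cases (1),(2)) or Dos Santos Ferreira--Kurylev--Lassas--Salo (case (3)) give the relation between $g$ and $\hat g$. Your reduction to $g=\hat g$ via pullback by $F$ and two-dimensional conformal invariance is a clean simplification; the paper instead carries $F$ and the conformal factor $c$ through each identity, but the effect is the same. The inductive structure---assume the $(k-3)$-jets of $\Gamma,\widehat\Gamma$ at $q$ agree, deduce that lower linearizations coincide, then extract the $(k-2)$-jet from the $k$th linearized DN data---is also what the paper does.

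The real divergence is in what you call ``the principal obstacle'': separating the constants $\partial^\beta\Gamma^i_{ab}(q)-\partial^\beta\widehat\Gamma^i_{ab}(q)$ from the integral identity. You propose density of products of harmonic functions, tailored to each geometric setting (holomorphic factors, Runge, CGOs with ray-transform injectivity). This is over-engineered and not quite the right tool: those techniques are built to recover unknown \emph{functions on $M$}, whereas here the unknowns are finitely many \emph{constants} sitting at a single point $q\in\Omega$. The paper's argument is far more elementary and uniform across all three cases. It exploits that each first linearization $v_{j}$ is an $\R^n$-valued harmonic function with \emph{decoupled components}, so one may prescribe boundary data componentwise. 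To isolate $\partial_{A_1^0\cdots A_{k-2}^0}\Gamma^i_{a_0 b_0}(q)$, take the test function $\psi$ and $v_{j_1},\dots,v_{j_{k-2}}$ to be a constant $C$ in one prescribed target component and zero in the others, and take $v_{j_{k-1}},v_{j_k}$ to equal a single non-constant scalar harmonic function in components $a_0,b_0$ and zero elsewhere. In the resulting cyclic sum only one term survives, and the identity collapses to
\[
C^{k-1}\,h_{\ell_0 i}(q)\bigl(\partial_{A_1^0\cdots A_{k-2}^0}\Gamma^i_{a_0 b_0}(q)-\partial_{A_1^0\cdots A_{k-2}^0}\widehat\Gamma^i_{a_0 b_0}(q)\bigr)\int_M |dv^{a_0}_{j_{k-1}}|_g^{2}\,dV_g=0.
\]
The integral is positive since the boundary datum is non-constant; varying $\ell_0$ and inverting $h(q)$ gives the component. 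No holomorphic factorization, Runge approximation, or CGO construction is needed for the target-jet part---only the existence of one non-constant harmonic function on $(M,g)$. So the step you flag as the main difficulty is in fact a short computation once boundary data are chosen componentwise; what your sketch is missing is precisely this observation.
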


\begin{rmk} \label{rmk:comm_main} \hfill
\begin{enumerate}
    \item The hypothesis in the convexity of $\partial \Omega$ and the curvature of $\Omega$ are used for the well-posedness of the Dirichlet problem for \eqref{eq:hm}, see Theorem \ref{thm:wp}.
    \item On the third point of Theorem \ref{thm:main}, CTA stands for conformally transversally anisotropic manifold, see Definition \ref{defin:cta}. The definition of injectivity of the ray transform can be found in Definition \ref{defin:ray_inj}.
    \item We consider the target to be a domain in $\R^{n}$ so that we can apply the higher order linearization method.
    \item We do not expect to recover the conformal factor, since \eqref{eq:hm} is conformally invariant in dimension $2$.
\end{enumerate}
\end{rmk}

The proof Theorem \ref{thm:main} is based on the \emph{higher order linearization method} (HOL) introduced in \cite{klu18}. We linearize \eqref{eq:hm} around the constant map $M \ni x\mapsto q \in N$. The (first) linearization decouples the system into $\dim N$ linear PDE which says that each component of the linearized solution is a harmonic function on $M$. Therefore, we are in the setting of \cites{lu, dsfkls16}, which gives the relation between $g$ and $\hat{g}$. To obtain jets of the metrics on the codomain, we compute the second linearization, third, and more generally, the $N$th linearization of \eqref{eq:hm}. From this, we derive integral identities involving the derivatives of the Dirichlet-to-Neumann map of the corresponding order, and the Christoffel's symbols together with their derivatives. Using the relations between the metrics in $M$, we are able to obtain other integral identities, which, after choosing appropriate boundary values for the linearizations, allow us to obtain the components of the jets of the metrics on the codomain.

By the Unique Continuation Principle for real-analytic functions, Theorem \ref{thm:main} implies the following.

\begin{cor}
    In the setting of Theorem \ref{thm:main}, if $\Omega$, $h$, and $\hat{h}$ are real-analytic, they are equal.
\end{cor}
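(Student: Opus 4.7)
The plan is to reduce the corollary to a direct application of the jet-determination conclusion in Theorem \ref{thm:main} combined with the identity principle for real-analytic functions. No further PDE or geometric work is needed beyond what Theorem \ref{thm:main} already provides; the only task of the corollary is to upgrade the pointwise jet information supplied by the theorem to a global statement on $\Omega$.

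First I would invoke the last sentence of Theorem \ref{thm:main}: under the standing hypotheses, and the normalization $h(q) = \hat h(q)$ carried over from the theorem, the $C^\infty$ jets of $h$ and $\hat h$ at the distinguished point $q$ agree. Working in the fixed Euclidean chart on $\Omega \subset \R^n$, this translates into the assertion that every matrix entry $h_{ij} - \hat h_{ij}$, viewed as a scalar function on $\Omega$, has vanishing partial derivatives of all orders at $q$.

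Now, if $h$ and $\hat h$ are real-analytic, then each difference $h_{ij} - \hat h_{ij}$ is real-analytic on $\Omega$ and agrees with its Taylor series at $q$ on some neighborhood of $q$. The previous step forces this Taylor series to be identically zero, so $h_{ij} = \hat h_{ij}$ on an open neighborhood of $q$. Since $\Omega$ is a domain and hence connected, the identity theorem for real-analytic functions then propagates this equality to all of $\Omega$, yielding $h = \hat h$ globally. The assumption that $\Omega$ itself is real-analytic is needed only to give meaning to ``real-analytic metric'' on the manifold; the argument is insensitive to this beyond ensuring that the Taylor expansion at $q$ converges to $h_{ij} - \hat h_{ij}$ near $q$. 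The only conceivable obstacle, namely disconnectedness of $\Omega$, is ruled out by the definition of a domain, so no substantive difficulty arises.
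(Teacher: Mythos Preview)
Your proposal is correct and matches the paper's own argument: the paper states the corollary as an immediate consequence of Theorem~\ref{thm:main} ``by the Unique Continuation Principle for real-analytic functions,'' which is exactly the identity-theorem step you spell out. There is nothing to add.
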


If we can extend $(\Omega,h)$, $(\Omega,\hat{h})$ in a proper way, we obtain the following consequence:

\begin{cor}
In the setting of Theorem \ref{thm:main}, assume either that 
\begin{enumerate}
    \item $\hat{h}=\tilde{c}h$, for some $\tilde{c} \in C^{\infty}(\Omega)$;
    \item $F^{*}\hat{h}=h$, for some diffeomorphism of $\Omega$ fixing $\partial M$.
\end{enumerate}
If we can extend $\Omega$ to $\mho$, and $h$, $\hat{h}$ to $H$, $\hat{H}$, respectively, so that $(\mho,H)$ and $(\mho,\hat{H})$ satisfy the hypothesis of Theorem \ref{thm:main}, then the $C^{\infty}$ jets of $h$ and $\hat{h}$ at the boundary coincide. In particular, in the analytic setting $h=\hat{h}$.
\end{cor}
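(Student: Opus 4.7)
The plan is to apply the jet statement of Theorem \ref{thm:main} at an arbitrary boundary point $q_0\in\partial\Omega$, using the extension to make $q_0$ an interior point of the target. The hypothesis packages an extended pair $(\mho,H)$, $(\mho,\hat H)$ that satisfies Theorem \ref{thm:main}; I will read this as giving, for each chosen $q_0$, a ball $B_R(q_0)\subset\mho$ on which the extended DN maps agree, either by the freedom in choosing the extension itself or by observing that the integral identities arising in the proof of Theorem \ref{thm:main} are local in the target and can be re-centered.

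To invoke the last sentence of Theorem \ref{thm:main} at basepoint $q_0$, I need $H(q_0)=\hat H(q_0)$. In case (1), the conformal relation $\hat h=\tilde c h$ on $\Omega$ extends by continuity to $\hat H(q_0)=\tilde c(q_0)H(q_0)$ at $q_0\in\partial\Omega$. Since the harmonic map system \eqref{eq:hm} depends on the target metric only through its Christoffel symbols, multiplying $\hat H$ globally by the positive constant $\tilde c(q_0)^{-1}$ leaves \eqref{eq:hm} and hence the DN map unchanged, and it also preserves the sign of the sectional curvature and the strict convexity of $\partial\mho$; after this rescaling one has $\hat H(q_0)=H(q_0)$. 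In case (2), I would extend the isometry $F\colon(\Omega,h)\to(\Omega,\hat h)$ to a diffeomorphism $\tilde F$ of $\mho$ that still fixes $\partial\Omega$ pointwise; since isometries carry harmonic maps to harmonic maps, pulling back $\hat H$ along $\tilde F$ gives an equivalent problem with the same DN map, and $\tilde F(q_0)=q_0$ produces the desired pointwise match.

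With $H(q_0)=\hat H(q_0)$ arranged, Theorem \ref{thm:main} gives that the $C^\infty$ jets of $H$ and $\hat H$ coincide at $q_0$; since the extensions agree with $h,\hat h$ on $\Omega$, this implies jet agreement of $h$ and $\hat h$ at $q_0$. As $q_0\in\partial\Omega$ is arbitrary, the jets coincide throughout $\partial\Omega$. In the analytic setting, the identity principle for real-analytic functions upgrades jet agreement at any single boundary point to the global equality $h=\hat h$ on $\Omega$.

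The most delicate step, I expect, will be the bookkeeping in case (2): although $F|_{\partial\Omega}=\mathrm{Id}$, the differential $dF$ at a boundary point need not be the identity, so translating jet agreement for the pulled-back problem into jet agreement for $h$ and $\hat h$ requires extracting from Theorem \ref{thm:main} that the jet of $\tilde F$ along $\partial\Omega$ is trivial. A closely related technical point is making rigorous the re-centering of the extended DN equality around an arbitrary $q_0\in\partial\Omega$, which is really what the hypothesis of being able to choose the extension is purchasing.
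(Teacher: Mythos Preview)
The paper does not supply a proof of this corollary; the intended argument is presumably the one you sketch---make $q_0\in\partial\Omega$ interior by passing to the extensions $(\mho,H)$, $(\mho,\hat H)$, then invoke the jet clause of Theorem~\ref{thm:main} at $q_0$. Your reading of the extension hypothesis and your identification of the key obstruction (arranging $H(q_0)=\hat H(q_0)$) are both on target, and your last paragraph correctly isolates the delicate points.

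There is, however, a genuine gap in your treatment of case~(1). Rescaling $\hat H$ by the constant $\tilde c(q_0)^{-1}$ does preserve the harmonic-map equation and the DN map, and Theorem~\ref{thm:main} then yields that the jets of $H$ and $\tilde c(q_0)^{-1}\hat H$ agree at $q_0$. But you then assert jet equality of $H$ and $\hat H$; that does not follow. What you have actually shown is that all derivatives of $\tilde c$ vanish at $q_0$ (equivalently, the jet of $\hat h$ at $q_0$ equals that of the constant multiple $\tilde c(q_0)\,h$), while the value $\tilde c(q_0)$ itself remains undetermined---constant rescalings of the target metric are precisely the gauge freedom you exploited to perform the normalization. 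Nothing in the stated hypotheses forces $\tilde c(q_0)=1$. The same phenomenon recurs in case~(2), as you yourself flag: after pulling back by $\tilde F$ you learn about $\tilde F^*\hat H$, not $\hat H$, and since $F|_{\partial\Omega}=\mathrm{Id}$ only pins down the tangential part of $dF$ along $\partial\Omega$, you have no control over the normal component. In short, your strategy recovers the jet of $\hat h$ at $\partial\Omega$ only modulo the natural gauge (a constant factor in (1), the isometry $F$ in (2)); obtaining the literal jet equality the corollary asserts requires an additional normalization that neither your argument nor the paper makes explicit.
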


For example, the extension procedure can be done if we assume the geometric conditions on $\Omega$ to be strict, that is, if $\partial \Omega$ is strictly convex and the curvature is negative.

A more geometric inverse problem is the \emph{boundary rigidity problem}. The problem asks to what extent the boundary distance function of a compact Riemannian manifold determines the metric \cite{michel}. In the 2-dimensional case, the problem was solved for \emph{simple geometries} \cite{pu2005}, and in higher dimensions under a \emph{foliation condition} \cite{suv}. A generic result was proved in \cite{su}. Some of these results have been generalized for magnetic systems \cite{dpsu} and for $\mathcal{MP}$-systems \cites{az, mt1, mt2}. One interesting generalization of boundary rigidity is considering generalization of geodesics, i.e., submanifolds of higher dimension. Recently, in  \cites{ABN20, cllo24, cllz, clz} similar problems are addressed using minimal surfaces, see below. In this setting, and since geodesics and minimal isometric immersions are a particular case of harmonic maps \cite{Jost17}*{Section 5.4}, we ask the following question.

\begin{quest} \label{quest:2}
Do the energies of harmonic maps $u \colon M \to \Omega$ determine the metrics $g$ and $h$?
\end{quest}

The energy is the classical Dirichlet energy for maps between manifolds, see \eqref{eq:energy} and Lemma \ref{lemma:e_to_dn} below. We obtain the following result about the metrics on the domain.

\begin{cor} (Energy Rigidity) \label{cor:er}
Let $M$ be smooth compact manifold with smooth boundary and let $\Omega \subset \R^{n}$ be a domain. Let $g$ and $h$ be Riemannian metrics on $M$ and $\Omega$, respectively. Assume that the sectional curvatures of $h$ are non-positive, and that $\partial \Omega$ is strictly convex with respect to $h$. Then the knowledge of $\partial M$, $g|_{\partial M}$, $h|_{B_{r}(q)}$ and the energies of harmonic maps with boundary values $\psi$, for all $\psi \in C^{\infty}(\partial M,B_{r}(q))$, determine:
\begin{enumerate}
    \item $g$ up to isometry and conformal class, if $\dim M=2$;
    \item $g$ up to isometry if $\dim M \geq 3$ and we work in the analytic setting;
    \item $g$, if $\dim M \geq 3$ is a CTA manifold in a fixed conformal class with injective ray transform on its transversal manifold.
\end{enumerate}
\end{cor}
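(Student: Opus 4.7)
The plan is to reduce Corollary \ref{cor:er} directly to Theorem \ref{thm:main} via the energy-to-Dirichlet-to-Neumann identity encoded in Lemma \ref{lemma:e_to_dn}. Suppose $g$ and $\hat{g}$ are two metrics on $M$ satisfying the hypotheses of the corollary, with $g|_{\partial M}=\hat{g}|_{\partial M}$, and giving rise to equal energies for every boundary value $\psi \in C^{\infty}(\partial M, B_{r}(q))$. My goal is to show $\Lambda_{g,h}=\Lambda_{\hat{g},h}$ on this class of boundary data; once this is done, Theorem \ref{thm:main} (applied with $\hat{h}=h$) immediately yields the three gauge conclusions (1)--(3).

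The bridge between energies and the Dirichlet-to-Neumann map is the first-variation formula. Writing $u^{\psi}$ for the harmonic map with boundary value $\psi$ produced by the well-posedness theorem, and using that $u^{\psi}$ is a critical point of the Dirichlet energy with respect to interior variations, only boundary contributions survive when one differentiates $\psi \mapsto E_{g}(u^{\psi})$. A standard integration-by-parts argument yields
\[
\frac{d}{dt}\bigg|_{t=0} E_{g}\bigl(u^{\psi+t\delta\psi}\bigr) = \int_{\partial M} h_{ij}(\psi)\,\Lambda_{g,h}(\psi)^{i}\,\delta\psi^{j}\, dV_{g|_{\partial M}},
\]
for every smooth variation $\delta\psi \colon \partial M \to \R^{n}$; since $B_{r}(q)$ is open, the perturbed data $\psi+t\delta\psi$ remains admissible for $|t|$ small. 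The right-hand side is determined by $g|_{\partial M}$, $h|_{B_{r}(q)}$, and $\Lambda_{g,h}(\psi)$; the left-hand side is determined by the energy functional. Because $\delta\psi$ ranges over all of $C^{\infty}(\partial M;\R^{n})$ and $h$ is nondegenerate, equality of the two energy functionals forces $\Lambda_{g,h}(\psi)=\Lambda_{\hat{g},h}(\psi)$ for every admissible $\psi$.

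With this equality of Dirichlet-to-Neumann maps in hand, the three statements in Theorem \ref{thm:main} give the three conclusions of Corollary \ref{cor:er} verbatim. The only non-trivial step in this reduction is the first-variation identity: one must know that $\psi \mapsto u^{\psi}$ is smooth enough to differentiate, which follows from the implicit function theorem applied near constant maps (invoking the non-positive curvature and strict convexity hypotheses through the well-posedness result), and one must verify that boundary variations inside $B_{r}(q)$ are plentiful enough to test against $\Lambda_{g,h}(\psi)-\Lambda_{\hat{g},h}(\psi)$. Both points are routine once the well-posedness of the Dirichlet problem for \eqref{eq:hm} is in place, so the genuine content of the corollary is packaged entirely in Theorem \ref{thm:main}.
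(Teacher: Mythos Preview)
Your proposal is correct and follows the same route as the paper: the first-variation identity (which is exactly the content of Lemma \ref{lemma:e_to_dn}) shows that the energies together with $\partial M$, $g|_{\partial M}$, and $h|_{B_{r}(q)}$ determine $\Lambda_{g,h}$, after which Theorem \ref{thm:main} (with $\hat h=h$) gives the three conclusions verbatim. The only cosmetic difference is that the paper packages the variation computation inside the proof of Lemma \ref{lemma:e_to_dn} rather than reproving it, and invokes Theorem \ref{thm:wp} directly for the smooth dependence $\psi\mapsto u^{\psi}$ instead of the implicit function theorem.
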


The energy rigidity follows from Theorem \ref{thm:main} and Lemma \ref{lemma:e_to_dn}, where we show that the energies of harmonic maps determine the Dirichlet-to-Neumann map.

\subsection*{Previous literature}

Harmonic maps arise as minimizers of the Dirichlet energy for maps $u \in C^{\infty}(M,N)$
\begin{equation}
    \label{eq:energy}
    \frac{1}{2}\int_{M} \|du\|_{T^{*}M \otimes u^{*}TN}^{2} dV_{g},
\end{equation} 
where $\|\bullet\|_{T^{*}M \otimes u^{*}TN}$ denotes the norm on $T^{*}M \otimes u^{*}TN$. The first global existence result is due to Eells and Sampson \cite{ES64}, where they introduced the heat flow method, in which one deforms a map via a heat equation, so that in the limit ones obtain an harmonic map. The result works under the assumption of that the sectional curvature of the target is non-positive. This was adapted to manifolds with boundary in \cite{hamilton}, where the author also assumes convexity of the boundary of the target manifold. Harmonic maps have been playing a fundamental role in the development of geometric analysis in the last decades. From trying to extending the existence results to non-compact complete manifolds, to trying to understand the singularities that can arise in the heat flow if we remove the condition about the curvature in the target. We recommend \cites{ES95, HW08, LW08} to the interested reader. 

Regarding the anisotropic Calder\'on's problem, it has been fully solved in dimension 2, and in the real-analytic setting in the higher dimensional case \cite{lu}. It has also been solved for Einstein manifolds \cite{GSB09}. We also mention that it has been solved in some CTA geometries in some cases \cites{dksu,dsfkls16, Cekic17, kls22}. In recent years, the fractional version of the problem has been extensively studied, see \cites{grsu, fgku, cgru, ruland, fku24}, and the recent survey \cite{Covi24_survey}. There are results for other fractional elliptic operators, see \cites{Chien23, QU24}.

In recent years, the inverse problems community has been addressing questions similar to \ref{quest:1} and \ref{quest:2}, particularly regarding minimal surfaces. We now proceed to discuss some results, which served as motivation for this work.

To the author's knowledge, the first work in this direction is \cite{ABN20}, where they determine, under some geometric assumptions, the metric (up to a diffeomorphism fixing the boundary) of a 3-ball using the knowledge of least areas circumscribed by simple closed curves on $\partial M$. This kind of problem, as we mentioned before, is motivated by a higher dimensional generalization of the boundary rigidity problem, but also by its connection with physics via the AdS/CFT correspondence. 

There are some results dealing with Calder\'on's problem for minimal surfaces as well. For instance, in \cite{nurminen23} is studied the inverse problem of determining the metric of $(\R^{n},g)$, where $g$ is conformal to the Euclidean metric, via the Dirichlet-to-Neumann map of minimal surfaces. Afterwards, the same author extended their results to \emph{admissible geometries} \cite{nurminen24}, i.e., CTA manifolds so that the transversal manifold is simple. The inverse problem for minimal surfaces on manifolds of the form $\R \times M$, where $M$ is a Riemannian surface was studied in \cite{cllo24}, where they also showed that the areas of minimal surfaces determine the DN map for the minimal surface equation (as in Corollary \ref{cor:er}). In \cites{cllz, clz}, the authors proved two main results. In first place, they showed Dirichlet-to-Neumann map for the Schr\"odinger equation determines the metric and the potential (generalizing a previous result \cite{gt11}). Secondly, they proved that the Dirichlet-to-Neumann map for the minimal surfaces determine the metric and the second fundamental form of the immersion (up to some gauge in each case). As a consequence of solving the Calder\'on problem for minimal surfaces, the authors then show that the volume of minimal surfaces determine the geometry of the domain and the second fundamental form of the immersion (again, up to some gauge), similarly to Corollary \ref{cor:er}.

The results mentioned in the previous paragraph are based on the higher order linearization, the construction of complex geometrical optics (CGO), and applications of results for Calder\'on's problem for Schr\"odinger equations. For example, \cites{nurminen23, nurminen 24} use higher order linearization to obtain information about the jets of the metrics (as we do for Theorem \ref{thm:main}), 
As we mentioned before, our methods are similar. We take advantage of the nonlinearity involving derivatives of the maps and the Christoffel's symbols of the codomain to obtain information of both, the metric on the target and the metric on the codomain. However, we do not need to construct CGO since the quantities that we determine are localize at just one point, and hence we only need to show that some integrals are non-negative, instead of study their asymptotic behavior, see Section \ref{sec:pro}.

Finally, we would like to mention that there is a work about another geometric PDE: the conformal Laplacian \cite{lls}. There, adapting the methods from \cite{lu}, the authors show that in dimension greater than 3, the Dirichlet-to-Neumann map determines the metric up a diffeomorphism and conformal factor, in the analytic category. We also point out that harmonic maps already appeared in the context of inverse problems in the past. Indeed, in \cites{SU91} the authors relate the anisotropic Calderón's problem to the existence of harmonic maps which are diffeomorphism and fix the boundary.

\subsection*{Organization of the paper} In Section \ref{sec:hol} we compute the first, second, and higher order linearizations of \eqref{eq:hm} around $q \in \Omega$. In Section \ref{sec:in_id} we use these equations to obtain integral identities. In Section \ref{sec:pro} we prove Theorem \ref{thm:main}: in Subsection \ref{sub:metric_domain} we determine the metric on the domain, in subsections \ref{sub:first_der} and \ref{sub:second_der} we determine first and second derivatives of the codomain's metric at $q$, and in subsection \ref{sub:jet} we determine its jet at $q$. Finally, in Section \ref{sec:er} we prove Corollary \ref{cor:er}. 

\subsection*{Acknowledgments} The author would like to thank G. Uhlmann for suggesting the problem. The author would like to thank to Gabriel P. Paternain, P. Stefanov and G. Uhlmann also, for helpful discussions and commentaries on preliminary versions of this article. In addition, the author would like to thank University of Washington for its hospitality during his visit. The author was partly supported by NSF Grant DMS-2154489 and Ross-Lynn scholar grant.

\section{High Order Linearization}
\label{sec:hol}

Let $\psi \in C^{\infty}(\partial M,\Omega)$. Consider the Dirichlet problem for harmonic maps, that is,
\begin{equation} \label{eq:Dir_prob}
    \begin{cases}
    \Delta_g u^i+g^{\alpha \beta} \Gamma_{j k}^i(u) \frac{\partial u^j}{\partial x^{\alpha}} \frac{\partial u^k}{\partial x^{\beta}}=0 & \text{in } M, \\
    u=\psi & \text{on } \partial M.
\end{cases}
\end{equation}

Let $C_{\psi}^{\infty}(M,\Omega)$ be the closed subspace of $C^{\infty}(M,\Omega)$ consisting of smooth maps $u \colon M \to \Omega$ such that $u|_{\partial M}=\psi$. Well-possedness for \eqref{eq:Dir_prob} is given by a combination of results from \cite{hamilton} and \cite{hartman}:

\begin{thm}[\cite{ES95}*{p. 59-60}] \label{thm:wp}
Let $(M,g)$ and $(\Omega,h)$ be compact manifolds with boundary. Assume that the sectional curvature of $N$ is non-positive and that $\partial \Omega$ is convex. Then the problem \eqref{eq:Dir_prob} has a unique smooth solution in each relative homotopy class of $C_{\psi}^{\infty}(M,\Omega)$. Furthermore, $u$ depends smoothly on its boundary value.
\end{thm}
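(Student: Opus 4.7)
The plan is to combine two classical ingredients: existence via the harmonic-map heat flow of Eells--Sampson \cite{ES64}, adapted to manifolds with boundary by Hamilton \cite{hamilton}, and Hartman's second-variation argument \cite{hartman} for uniqueness within a relative homotopy class.

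First, for existence I would pick any smooth representative $u_0 \in C_{\psi}^{\infty}(M,\Omega)$ of the prescribed homotopy class and run the tension-field flow
$$\partial_t u = \tau_g(u), \qquad u(\cdot,0)=u_0, \qquad u(\cdot,t)|_{\partial M}=\psi,$$
where $\tau_g(u)=\tr_g \nabla du$. The Bochner--Eells--Sampson identity, under the hypothesis of nonpositive target curvature, yields $(\partial_t - \Delta_g)\tfrac12|du|^2 \leq C|du|^2$; combined with boundary gradient estimates this gives uniform interior $C^1$ bounds. Convexity of $\partial \Omega$ lets one pick a defining function $\rho$ for $\Omega$ with $-\rho$ convex, so that $\rho\circ u$ obeys a parabolic maximum principle and the image stays trapped in $\Omega$. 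Parabolic bootstrapping then provides uniform $C^k$ estimates for every $k$, and the Dirichlet energy is non-increasing along the flow, so a subsequential limit as $t \to \infty$ furnishes a harmonic map with boundary value $\psi$ lying in the same relative homotopy class as $u_0$.

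Next I would prove uniqueness within the homotopy class via Hartman's argument: given two harmonic maps $u_0, u_1$ with the same boundary values and homotopic rel.\ $\partial M$, join them by the geodesic homotopy in the target (well-defined by nonpositive curvature and convexity of $\Omega$), and check via the second-variation formula that $\tfrac12 d_h(u_0, u_1)^2$ is $g$-subharmonic on $M$; since it vanishes on $\partial M$, the maximum principle forces $u_0 = u_1$.

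Finally, smooth dependence on $\psi$ will follow from the implicit function theorem applied to the harmonic-map operator $F$ between appropriate H\"older (or Sobolev) spaces. Its linearization at a solution $u$ is the Jacobi operator $J_u v = \tr_g \nabla^2 v - \tr_g R_h(v,du)du$ on sections of $u^*TN$, and integration by parts with $v|_{\partial M}=0$ yields
$$\langle J_u v, v\rangle_{L^2} = -\int_M \bigl(|\nabla v|^2 + \langle R_h(v,du)du, v\rangle\bigr)\, dV_g \leq -\int_M |\nabla v|^2\, dV_g$$
under nonpositive curvature, so $J_u$ is an isomorphism between the Dirichlet spaces and IFT applies; elliptic regularity upgrades the conclusion to $C^\infty$. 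The hard part will be keeping the image of the flow strictly inside $\Omega$: when $\partial \Omega$ is only convex rather than strictly convex, one has to exhaust $\Omega$ by strictly convex subdomains, or perturb $\rho$ so that $-\rho$ becomes strictly convex, which is the classical device of \cite{hamilton}.
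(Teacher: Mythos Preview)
The paper does not prove this theorem; it simply quotes it as a known result, attributing existence to Hamilton \cite{hamilton}, uniqueness to Hartman \cite{hartman}, and the summary statement to Eells--Lemaire \cite{ES95}. Your sketch correctly reconstructs exactly these classical ingredients (heat flow with boundary for existence, geodesic-homotopy/second-variation subharmonicity for uniqueness, and the Jacobi-operator implicit function theorem for smooth dependence), so there is nothing to compare beyond noting that you have supplied the outline the paper omits.
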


Here, a relative homotopy class $C_{\psi}^{\infty}(M,\Omega)$ is just a connected component of $C_{\psi}^{\infty}(M,\Omega)$.

As we mentioned in the introduction, we will use the high order linearization technique. Let us fix $q \in \Omega$. Note that the constant map 
\begin{align*}
    \kappa_{q} \colon M \to \Omega, \\
    x \mapsto q,
\end{align*}
is the only solution to \eqref{eq:Dir_prob} with boundary value $\partial M \ni x \mapsto q$. We will linearize \eqref{eq:Dir_prob} around $\kappa_{q}$. Let
\begin{equation} \label{eq:boundary_value}
    f_{\eps}=\sum_{j=1}^{N_{0}+1}\eps_{j}f_{j}.
\end{equation}
where $f_{j}\colon \partial M \to \R^{n}$ are smooth and $\eps=(\eps_{1},\eps_{2},\ldots,\eps_{N_{0}+1}) \in \R^{N_{0}+1}$ is small enough so that $q+f_{\eps} \in B_{r}(q)$. Here $N_{0}$ will vary depending on how many linearizations we will need. For example, for the second linearization we only need $N_{0}+1=2$, but to obtain information about the metric on the codomain using the second linearization, we will need $N_{0}+1=3$. For each $\eps$, there is a solution $u_{\eps}$ to \eqref{eq:Dir_prob}, with boundary value $f_{\eps}$, and homotopic to it. Observe that $u_{0}=\kappa_{q}$. 

To avoid overloading the notation, we will write $u$ instead of $u_{\eps}$. Following the works \cites{cllz, clz}, we introduce the notation
\begin{equation} \label{eq:linearizations}
    v_{\mu}=\partial_{\eps_{\mu}}u|_{\eps=0}, \quad v_{\mu \nu}=\partial_{\eps_{\mu}\eps_{\nu}}u|_{\eps=0},
\end{equation}
and so on. We will reserve the indices $\mu,\nu,\theta,\varphi, \mu_{j}$ for the linearizations. In order to simplify the notation, we write $\partial_{\alpha}$ to denote $\partial/\partial x^{\alpha}$. The Greek indices $\alpha,\beta$ will be used to denote quantities and derivatives arising from $M$, while the indices $i,j,k,\ell$ will be used to denote quantities and derivatives in $\Omega$. 

Before linearizing, note that since the Laplace--Beltrami operator is linear, we only have to focus on linearizing the second term on the left hand-side of \ref{eq:hm}. Indeed, is enough linearzing this term without $g^{-1}$. Therefore, we compute
\begin{align*}
    \partial_{\eps_{\mu}}(\Gamma_{j k}^i(u) \partial_{\alpha} u^{j}\partial_{\beta}u^{k}) =&(\partial_{\eps_{\mu}}\Gamma_{j k}^i(u))\partial_{\alpha} u^{j}\partial_{\beta}u^{k}+\Gamma_{ij}^{k}(u)\partial_{\alpha} \partial_{\eps_{\mu}}u^{j}\partial_{\beta}u^{k} \\
    &+\Gamma_{ij}^{k}(u)\partial_{\alpha} u^{j}\partial_{\beta}\partial_{\eps_{\mu}}u^{k}.
\end{align*}
Since $u|_{\eps=0}=\kappa_{q}$, after evaluating at $\eps=0$ all the terms vanishes. Hence, the first linearization is just 
\[ \Delta_g v_{\mu}^i=0.\]
Furthermore, observe that
\[ \partial_{\eps_{\mu}}|_{\eps=0}f_{\eps}=f_{\eps_{\mu}}. \]

To compute the second linearization, as before, we first compute the second derivative of the term involving the Christoffel's symbols. We obtain 
\begin{align*}
    \partial_{\eps_{\mu}\eps_{\nu}}(\Gamma_{jk}^{i}(u)\partial_{\alpha}u^{i}\partial_{\beta}u^{k})=&\partial_{\eps_{\nu}}( (\partial_{\eps_{\mu}} \Gamma_{jk}^{i}(u))\partial_{\alpha}u^{i}\partial_{\beta}u^{k} \\
    &+\Gamma_{jk}^{i}(u)( \partial_{\alpha}(\partial_{\eps_{\mu}}u^{i})\partial_{\beta}u^{k}+\partial_{\alpha}u^{i}\partial_{\beta}(\partial_{\eps_{\mu}}u^{i}) )  ) \\
    =&(\partial_{\eps_{\mu}\eps_{\nu}}\Gamma_{jk}^{i}(u))\partial_{\alpha}u^{i}\partial_{\beta}u^{k}  \\
    &+(\partial_{\eps_{\mu}} \Gamma_{jk}^{i}(u) )((\partial_{\alpha} \partial_{\eps_{\nu}}u^{j})\partial_{\beta}u^{k}+\partial_{\alpha}u^{j}\partial_{\beta}\partial_{\eps_{\nu}}u^{k} ) \\
    &+(\partial_{\eps_{\nu}}\Gamma_{jk}^{i}(u))( (\partial_{\alpha}\partial_{\eps_{\mu}}u^{j} )\partial_{\beta}u^{k}+\partial_{\alpha}u^{j}\partial_{\beta}\partial_{\eps_{\mu}}u^{k}  ) \\
    &+\Gamma_{jk}^{i}(u)( (\partial_{\alpha}\partial_{\eps_{\mu} \eps_{\nu}}u^{j})\partial_{\beta}u^{k}+(\partial_{\alpha}\partial_{\eps_{\mu}}u^{j})\partial_{\beta}\partial_{\eps_{\nu}}u^{k} )  \\
    &+\Gamma_{jk}^{i}(u)( (\partial_{\alpha}\partial_{\eps_{\nu}}u^{j})\partial_{\beta}\partial_{\eps_{\mu}}u^{k}+(\partial_{\alpha}u^{j}) \partial_{\beta}\partial_{\eps_{\mu}\eps_{\nu}}u^{k} ).
\end{align*}
After evaluating at $\eps=0$, we obtain 
\[ \Gamma_{jk}^{i}(q) ( \partial_{\alpha}v_{\mu}^{j}\partial_{\beta}v_{\nu}^{k}+\partial_{\alpha}v_{\nu}^{j}\partial_{\beta}u^{k} ). \]
If we multiply this expression by $g^{\alpha \beta}$ and we sum over the repeated indices, we conclude that the second linearization takes the form
\[ \Delta_g v_{\mu \nu}^i+2g^{\alpha \beta}\Gamma_{jk}^{i}(q)\partial_{\alpha}v_{\mu}^{j}\partial_{\beta}v_{\nu}^{k}=0. \]
Note that
\[ v_{\mu \nu}^{i}|_{\partial M}=\partial_{\eps_{\nu}\eps_{\mu}}f_{\eps}|_{\eps=0}=0. \]
and the same holds for higher order linearizations because we are taking linear perturbations. To obtain the third and higher linearizations, we will use the \cite{Hardy06}*{Proposition 5} to write
\begin{equation} \label{eq:high_der}
    \frac{\partial^{N}}{\partial \eps_{1}\cdots \partial \eps_{N}}(\Gamma_{jk}^{i}\partial_{\alpha}u^{j}\partial_{\beta}u^{k})=\sum_{S \in 2^{[N]}} \frac{\partial^{|S|}\Gamma_{jk}^{i}}{\prod_{j \in S}\partial\eps_{j}} \sum_{T \in 2^{[N] \setminus S} } \frac{ \partial^{|T|}\partial_{\alpha}u^{j} }{\prod_{j \in T} \partial \eps_{j} } \frac{\partial^{N-|S|-|T|} \partial_{\beta}u^{k} }{\prod_{j \notin T} \partial \eps_{j} }
\end{equation}
where $[N]=\{1,2,\ldots,N\}$ and all the derivatives with respect to $\eps_{j}$'s are evaluated at $\eps=0$. To simplify the notation, we will write
\[ \frac{\partial^{|S|}\Gamma_{jk}^{i}}{\prod_{j \in S}\partial\eps_{j}}\bigg|_{\eps=0}=\partial_{S}\Gamma_{jk}^{i}, \quad \frac{ \partial^{|T|}\partial_{\alpha}u^{j} }{\prod_{j \in T} \partial \eps_{j} } \bigg|_{\eps=0}=\partial_{\alpha}v^{j}_{T}, \quad \frac{\partial^{N-|S|-|T|}\partial_{\beta}u^{k} }{\prod_{j \notin T} \partial \eps_{j} }\bigg|_{\eps=0}=\partial_{\beta}v^{k}_{T^{c}}. \]
We observe that since $\partial_{\alpha}u_{j}|_{\eps=0}=\partial_{\beta}u|_{\eps=0}=0$, we can write equation \eqref{eq:high_der} as
\begin{equation}
    \sum_{\substack{S \in 2^{[N]}\\ 0 \leq |S| \leq N-2}} \partial_{S}\Gamma_{jk}^{i} \sum_{\substack{T \in 2^{[N] \setminus S}\\ |T| \geq 1 \\ T \sqcup S \neq [N]}} \partial_{\alpha}v_{T}^{j}  \partial_{\beta}v_{T^{c}}^{k}.
\end{equation}

We summarize the formulas of this section in the following lemma, where we also add the formula for the third linearization that will be used explicitly on sections \ref{sec:in_id} and \ref{sec:pro}.

\begin{lemma} \label{lemma:linear_eq}
Let $f$ as in \eqref{eq:boundary_value}. Then, 
\begin{enumerate}
    \item The first linearization $v_{\mu}$ solves 
    \begin{equation}
        \begin{cases} \label{eq:1_lin}
        \Delta_g v_{\mu}^i=0 & \text{in } M, \\
        v_{\ell}^{i}=f_{\mu}^{i} & \text{on } \partial M.
        \end{cases}
    \end{equation}
    \item The second linearization $v_{\mu \nu}$ solves
    \begin{equation} \label{eq:2_lin}
        \begin{cases}
        \Delta_g v_{\mu \nu}^i+2g^{\alpha \beta}\Gamma_{jk}^{i}(q)\partial_{\alpha}v_{\mu}^{j}\partial_{\beta}v_{\nu}^{k}=0 & \text{in } M, \\
        v_{\mu \nu}^{i}=0 & \text{on } \partial M.
        \end{cases}
    \end{equation}
    \item The third linearization satisfies
    \begin{equation} \label{eq:3_lin}
        \begin{cases}
        \Delta_g v_{\mu \nu \theta}^i \\
        +2g^{\alpha \beta} \displaystyle \sum_{\mathrm{cyc}}[\partial_{\ell}\Gamma_{jk}^{i}(q)v_{\mu}^{\ell}\partial_{\alpha}v_{\nu}^{j}\partial_{\beta}v_{\theta}^{k}+\Gamma_{jk}^{i}(q) \partial_{\alpha}v_{\mu \nu}^{j}\partial_{\beta}v_{\theta}^{k} ]=0 & \text{in } M, \\
        v_{\mu \nu \theta}^{i}=0 & \text{on } \partial M,
        \end{cases}
    \end{equation}
    where the sum is over $(\mu,\nu,\theta)$.
    \item The $N$th linearization solves 
    \begin{equation} \label{eq:n_lin}
        \begin{cases}
                \Delta_{g}v_{1\cdots N}^{i}+g^{\alpha \beta}\displaystyle\sum_{\substack{S \in 2^{[N]}\\ 0 \leq |S| \leq N-2}} \partial_{S} \Gamma_{jk}^{i} \sum_{\substack{T \in 2^{[N] \setminus S}\\ |T| \geq 1 \\ T \sqcup S \neq [N]}} \partial_{\alpha}v_{T}^{j} \partial_{\beta}v_{T^{c}}^{k} =0 & \text{in } M, \\
            v=0 & \text{on } \partial M.
        \end{cases}
    \end{equation}
\end{enumerate}    
\end{lemma}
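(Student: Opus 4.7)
The plan is to derive all four identities by the same mechanism: apply $\partial_{\eps_{\mu_1}}\cdots\partial_{\eps_{\mu_N}}$ to the equation in \eqref{eq:Dir_prob}, evaluate at $\eps=0$, and exploit the fact that $u|_{\eps=0}=\kappa_q$ is the constant map. Since $\Delta_g$ is $\R$-linear in $u^i$ and commutes with $\partial_{\eps_j}$, the Laplacian contribution is always $\Delta_g v_{\mu_1\cdots\mu_N}^i$. The boundary data follow from the fact that $f_\eps$ is affine in $\eps$, so $\partial_{\eps_{\mu_1}}\cdots\partial_{\eps_{\mu_N}}f_\eps|_{\eps=0}=0$ whenever $N\geq 2$. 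All the work therefore lies in differentiating the nonlinear term $T:=g^{\alpha\beta}\Gamma_{jk}^i(u)\partial_\alpha u^j\partial_\beta u^k$ and reading off which summands survive at $\eps=0$; the pivotal observation is that $\partial_\alpha u|_{\eps=0}=\partial_\beta u|_{\eps=0}=0$, so any summand still containing an undifferentiated factor $\partial_\alpha u$ or $\partial_\beta u$ is killed.

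For $N=1$, a direct Leibniz expansion of $T$ produces three terms, each of which still carries an undifferentiated $\partial_\bullet u$ factor; hence \eqref{eq:1_lin}. For $N=2$, in order to survive the two $\eps$-derivatives must be distributed between $\partial_\alpha u$ and $\partial_\beta u$, one each, which leaves $\Gamma^i_{jk}(u)$ undifferentiated and yields $2\,\Gamma^i_{jk}(q)\partial_\alpha v_\mu^j\partial_\beta v_\nu^k$ after symmetrizing in $(\mu,\nu)$; this is \eqref{eq:2_lin}. For $N=3$, the three $\eps$-derivatives can either (i) place one on $\Gamma^i_{jk}(u)$ and the remaining two one each on $\partial_\alpha u$ and $\partial_\beta u$, producing the $\partial_\ell\Gamma^i_{jk}(q)v_\mu^\ell\partial_\alpha v_\nu^j\partial_\beta v_\theta^k$ family, or (ii) place none on $\Gamma^i_{jk}(u)$, in which case one of the two spatial-derivative factors receives two $\eps$-derivatives (giving $\partial_\alpha v_{\mu\nu}^j$) and the other receives the remaining one (giving $\partial_\beta v_\theta^k$). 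Summing over the three cyclic assignments of $(\mu,\nu,\theta)$ gives precisely the cyclic sum in \eqref{eq:3_lin}.

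For general $N$, apply the multivariable Leibniz formula recalled in \eqref{eq:high_der}: the parameter $S\subseteq [N]$ records which $\eps_j$'s differentiate $\Gamma^i_{jk}(u)$, and $T\subseteq [N]\setminus S$ records which of the remaining $\eps_j$'s differentiate $\partial_\alpha u$, with the rest falling on $\partial_\beta u$. Evaluating at $\eps=0$, a summand is nontrivial precisely when each of $\partial_\alpha u$ and $\partial_\beta u$ is hit at least once, equivalently $|T|\geq 1$ and $T\sqcup S\neq [N]$; these constraints in turn force $|S|\leq N-2$. Collecting the surviving terms and multiplying by $g^{\alpha\beta}$ gives \eqref{eq:n_lin}.

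The main obstacle is purely combinatorial: correctly identifying, at each order, which subsets of $[N]$ index nonvanishing contributions once $\eps$ is set to zero. This is exactly the content of the constraints $0\leq|S|\leq N-2$, $|T|\geq 1$, and $T\sqcup S\neq[N]$ appearing in \eqref{eq:n_lin}; once these are pinned down, the formula follows immediately from \eqref{eq:high_der}, and the low-$N$ statements \eqref{eq:1_lin}--\eqref{eq:3_lin} are recovered as special cases.
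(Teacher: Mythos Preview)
Your proposal is correct and follows essentially the same approach as the paper: linearize the Dirichlet problem by applying $\partial_{\eps_{\mu_1}}\cdots\partial_{\eps_{\mu_N}}$, note that the Laplacian passes through, that the linear boundary data force $v_{\mu_1\cdots\mu_N}|_{\partial M}=0$ for $N\geq 2$, and then use the multivariable Leibniz formula \eqref{eq:high_der} together with $\partial_\alpha u|_{\eps=0}=0$ to isolate the surviving terms. The paper carries out the $N=1,2$ cases by explicit expansion before stating the lemma and handles general $N$ exactly as you describe, so there is no substantive difference.
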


\section{Integral Identities} \label{sec:in_id}

In this section we prove some integral identities satisfied by the linearizations of harmonic maps. These will be used in order to obtain Alessandrini's identities in Section \ref{sec:pro}.

\begin{lemma}[Integral identity for the second linearization] \label{lemma:first_int_id}
    \[ \int_{\partial M}f_{\theta}^{\ell} h_{\ell i}(q)\partial_{\eps_{\mu}\eps_{\nu}}\Lambda^{i}(q+f)|_{\eps=0}dS_{g}=-2\int_{M}v_{\theta}^{\ell}h_{\ell i}(q)g^{\alpha \beta}\Gamma_{jk}^{i}(q)\partial_{\alpha}v_{\mu}^{i}\partial_{\beta}v_{\nu}^{i}dV_{g}. \]
\end{lemma}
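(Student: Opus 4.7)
The plan is a direct application of Green's identity to the second linearization equation. Starting from \eqref{eq:2_lin}, I would multiply the PDE for $v_{\mu\nu}^i$ by $v_{\theta}^{\ell} h_{\ell i}(q)$ (note that $h_{\ell i}(q)$ is just a constant matrix, since $q$ is fixed) and integrate over $M$ with respect to $dV_{g}$. This yields
\begin{equation*}
\int_{M} v_{\theta}^{\ell} h_{\ell i}(q) \Delta_{g} v_{\mu\nu}^{i} \, dV_{g} = -2 \int_{M} v_{\theta}^{\ell} h_{\ell i}(q) g^{\alpha\beta} \Gamma_{jk}^{i}(q) \partial_{\alpha} v_{\mu}^{j} \partial_{\beta} v_{\nu}^{k} \, dV_{g},
\end{equation*}
which is already the right-hand side of the claimed identity (the indices $i$'s on the inner $v$'s in the statement are evidently typos for $j$ and $k$).

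Next, I would apply Green's second identity to the left-hand side, using that $v_{\theta}^{\ell}$ is harmonic by the first linearization \eqref{eq:1_lin} (so $\Delta_{g} v_{\theta}^{\ell} = 0$), and that $v_{\mu\nu}^{i}|_{\partial M} = 0$. This eliminates one boundary term and the bulk term, leaving
\begin{equation*}
\int_{M} v_{\theta}^{\ell} h_{\ell i}(q) \Delta_{g} v_{\mu\nu}^{i} \, dV_{g} = \int_{\partial M} v_{\theta}^{\ell} h_{\ell i}(q) \, \partial_{\eta} v_{\mu\nu}^{i} \, dS_{g}.
\end{equation*}
Since $v_{\theta}^{\ell}|_{\partial M} = f_{\theta}^{\ell}$ from \eqref{eq:1_lin}, the boundary integrand is $f_{\theta}^{\ell} h_{\ell i}(q) \partial_{\eta} v_{\mu\nu}^{i}$.

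To finish, I must identify $\partial_{\eta} v_{\mu\nu}^{i}$ with $\partial_{\eps_{\mu}\eps_{\nu}} \Lambda^{i}(q + f_{\eps})|_{\eps=0}$. Since the smooth dependence of $u_{\eps}$ on boundary data from Theorem \ref{thm:wp} permits interchanging $\partial_{\eps_{\mu}\eps_{\nu}}$ with the normal derivative $\partial_{\eta}$, we have
\begin{equation*}
\partial_{\eps_{\mu}\eps_{\nu}} \Lambda^{i}(q + f_{\eps})|_{\eps=0} = \partial_{\eps_{\mu}\eps_{\nu}} (\partial_{\eta} u^{i})|_{\eps=0} = \partial_{\eta} (\partial_{\eps_{\mu}\eps_{\nu}} u^{i}|_{\eps=0}) = \partial_{\eta} v_{\mu\nu}^{i}.
\end{equation*}
Substituting this in and combining the two equalities above gives exactly the stated identity.

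There is no real obstacle here, since all the ingredients are in place: the first linearization provides harmonicity of $v_{\theta}$, the second linearization gives the PDE for $v_{\mu\nu}$, and both boundary conditions are explicit from Lemma \ref{lemma:linear_eq}. The only mildly delicate point worth writing out carefully is the commutation of $\partial_{\eta}$ with $\partial_{\eps_{\mu}\eps_{\nu}}|_{\eps=0}$, which relies on the smoothness in $\eps$ guaranteed by Theorem \ref{thm:wp}; everything else is a one-line integration by parts.
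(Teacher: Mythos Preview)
Your proposal is correct and follows essentially the same approach as the paper: multiply the second linearized equation by $v_{\theta}^{\ell}h_{\ell i}(q)$, integrate by parts (Green's identity), and use that $v_{\theta}^{\ell}$ is harmonic together with $v_{\mu\nu}^{i}|_{\partial M}=0$ to kill the extra terms. The paper arrives there via two applications of Green's first identity rather than one application of Green's second identity, and it leaves the identification $\partial_{\eta}v_{\mu\nu}^{i}=\partial_{\eps_{\mu}\eps_{\nu}}\Lambda^{i}(q+f)|_{\eps=0}$ implicit, but the content is the same (and your observation about the $j,k$ versus $i$ typo is correct).
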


\begin{proof}
We have the following chain of equalities
\begin{align*}
    \int_{\partial M}h_{\ell i}(q)f_{\theta}^{\ell}\partial_{\eta}v_{\mu \nu}^{i}dS_{g}=&\int_{M}h_{\ell i}(q)v_{\theta}^{\ell} \Delta_{g}v_{\mu \nu}^{i}dV_{g}+\int_{M}h_{\ell i}(q)g(\nabla v_{\theta}^{\ell},\nabla v_{\mu \nu}^{i})dV_{g} \\
    =&\int_{M}h_{\ell i}(q)v_{\theta}^{\ell}(\Delta_{g}v_{\mu \nu}^{i}+2g^{\alpha \beta}\Gamma_{jk}^{i}(q)\partial_{\alpha}v_{\mu}^{j}\partial_{\beta}v_{\nu}^{k})dV_{g} \\
    &-2\int_{M}h_{\ell i}(q)v_{\theta}^{\ell}g^{\alpha \beta}\Gamma_{jk}^{i}(q)\partial_{\alpha}v_{\mu}^{i}\partial_{\beta}v_{\nu}^{i}dV_{g}\\
    &+\int_{\partial M}h_{\ell i}(q)v_{\mu \nu}^{i}\partial_{\eta}v_{\theta}^{\ell}dS_{g}-\int_{M}h_{\ell i}(q)v_{\mu \nu}^{i}\Delta_{g}v_{\theta}^{\ell}dV_{g} \\
    &=-2\int_{M}h_{\ell i}(q)v_{\theta}^{\ell}g^{\alpha \beta}\Gamma_{jk}^{i}(q)\partial_{\alpha}v_{\mu}^{i}\partial_{\beta}v_{\nu}^{i}dV_{g},
\end{align*}
where in the first two equalities we integrated by parts, while in the third one we used \eqref{eq:2_lin}, that $v_{\theta}^{\ell}$ is harmonic, and that $v_{\mu \nu}^{i}$ vanishes on the boundary of $M$.
\end{proof}

\begin{lemma}[Integral identity for the third linearization] \label{lemma:second_int_id}
    \begin{align*}
        \int_{\partial M}&f_{\varphi}^{m}h_{mi}(q) \partial_{\eps_{\mu}\eps_{\nu}\eps_{\theta}}\Lambda^{i}(q+f)|_{\eps=0}dS_{g} \\
        =&-2\int_{M}v_{\varphi}^{m}h_{mi}(q)g^{\alpha \beta} \displaystyle \sum_{\mathrm{cyc}}[\partial_{\ell}\Gamma_{jk}^{i}(q)v_{\mu}^{\ell}\partial_{\alpha}v_{\nu}^{j}\partial_{\beta}v_{\theta}^{k}+\Gamma_{jk}^{i}(q) \partial_{\alpha}v_{\mu \nu}^{j}\partial_{\beta}v_{\theta}^{k} ]dV_{g},
    \end{align*}
    where the sum over $(\mu,\nu,\theta)$.
\end{lemma}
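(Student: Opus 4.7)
The proof will mimic exactly the structure of Lemma \ref{lemma:first_int_id}, using that the third linearization $v_{\mu\nu\theta}$ satisfies the PDE \eqref{eq:3_lin} with zero boundary data, while $v_{\varphi}$ is a harmonic function on $M$ with boundary value $f_{\varphi}$. The only new ingredient is bookkeeping of the cyclic sum coming from the third linearization.

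The first step is to rewrite the left-hand side in terms of normal derivatives of $v_{\mu\nu\theta}$. By the definition of the Dirichlet-to-Neumann map \eqref{eq:dn} together with the smooth dependence given by Theorem \ref{thm:wp}, differentiating $\partial_{\eta}u|_{\partial M}$ in the three parameters $\eps_{\mu},\eps_{\nu},\eps_{\theta}$ and evaluating at $\eps=0$ yields
\[ \partial_{\eps_{\mu}\eps_{\nu}\eps_{\theta}}\Lambda^{i}(q+f)|_{\eps=0}=\partial_{\eta}v_{\mu\nu\theta}^{i}|_{\partial M}. \]
Hence the LHS is
\[ \int_{\partial M} h_{mi}(q)f_{\varphi}^{m}\partial_{\eta}v_{\mu\nu\theta}^{i}\, dS_{g}. \]

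Next I would integrate by parts twice, exactly as in Lemma \ref{lemma:first_int_id}. Since $v_{\varphi}|_{\partial M}=f_{\varphi}$ and $h_{mi}(q)$ is a constant, Green's identity gives
\begin{align*}
\int_{\partial M} h_{mi}(q)f_{\varphi}^{m}\partial_{\eta}v_{\mu\nu\theta}^{i}\, dS_{g}
&= \int_{M} h_{mi}(q)v_{\varphi}^{m}\Delta_{g}v_{\mu\nu\theta}^{i}\, dV_{g} + \int_{M} h_{mi}(q)\, g(\nabla v_{\varphi}^{m},\nabla v_{\mu\nu\theta}^{i})\, dV_{g}.
\end{align*}
For the second bulk integral, integrate by parts once more; the resulting boundary term vanishes because $v_{\mu\nu\theta}^{i}|_{\partial M}=0$, and the remaining bulk term vanishes because $v_{\varphi}^{m}$ is harmonic by \eqref{eq:1_lin}. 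Thus only the first bulk integral survives.

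Finally, I substitute the PDE \eqref{eq:3_lin} into the surviving bulk term: $\Delta_{g}v_{\mu\nu\theta}^{i}$ equals $-2g^{\alpha\beta}\sum_{\mathrm{cyc}}[\partial_{\ell}\Gamma_{jk}^{i}(q)v_{\mu}^{\ell}\partial_{\alpha}v_{\nu}^{j}\partial_{\beta}v_{\theta}^{k}+\Gamma_{jk}^{i}(q)\partial_{\alpha}v_{\mu\nu}^{j}\partial_{\beta}v_{\theta}^{k}]$. Plugging this in produces exactly the right-hand side claimed in the statement. There is no real obstacle here — this is a routine integration-by-parts argument — the only thing to be careful about is that the constants $h_{mi}(q)$ and $\Gamma^{i}_{jk}(q)$ pass through $\Delta_{g}$ and $\nabla$ freely (they are constant in $x \in M$), and that the cyclic sum is preserved unchanged by the manipulations since it is part of the right-hand side of the PDE being substituted.
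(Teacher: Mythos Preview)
Your proposal is correct and follows essentially the same approach as the paper: both arguments rewrite the boundary integral as $\int_{\partial M} h_{mi}(q)f_{\varphi}^{m}\partial_{\eta}v_{\mu\nu\theta}^{i}\,dS_{g}$, apply Green's identity, use that $v_{\mu\nu\theta}^{i}|_{\partial M}=0$ and that $v_{\varphi}^{m}$ is harmonic to kill the extra terms, and then substitute the third linearized equation \eqref{eq:3_lin}. The paper presents the add-and-subtract of the nonlinearity before the second integration by parts, while you do the second integration by parts first and then substitute the PDE, but this is only a cosmetic reordering of the same computation.
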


\begin{proof}
The proof is similar to the one of Lemma \ref{lemma:first_int_id}:
\begin{align*}
    \int_{\partial M}&f_{\varphi}^{m}\partial_{\eta}v_{\mu \nu \theta}^{i}dS_{g} \\
    =&\int_{M}h_{mi}(q)v_{\varphi}^{m} \Delta_{g}v_{\mu \nu \theta}^{i}dV_{g}+\int_{M}h_{mi}(q)g(\nabla v_{\varphi}^{m},\nabla v_{\mu \nu}^{i})dV_{g} \\
    =&\int_{M}h_{mi}(q)v_{\varphi}^{m} \bigg \lbrace \Delta_{g}v_{\mu \nu \theta}^{i} \\
    &+2g^{\alpha \beta} \displaystyle \sum_{\text{cyc}}[\partial_{\ell}\Gamma_{jk}^{i}(q)v_{\mu}^{\ell}\partial_{\alpha}v_{\nu}^{j}\partial_{\beta}v_{\theta}^{k}+\Gamma_{jk}^{i}(q) \partial_{\alpha}v_{\mu \nu}^{j}\partial_{\beta}v_{\theta}^{k} ] \bigg\rbrace dV_{g} \\
    &-2\int_{M}v_{\varphi}^{m}h_{mi}(q)
    g^{\alpha \beta} \displaystyle \sum_{\text{cyc}}[\partial_{\ell}\Gamma_{jk}^{i}(q)v_{\mu}^{\ell}\partial_{\alpha}v_{\nu}^{j}\partial_{\beta}v_{\theta}^{k}+\Gamma_{jk}^{i}(q) \partial_{\alpha}v_{\mu \nu}^{j}\partial_{\beta}v_{\theta}^{k} ]dV_{g} \\
    &+\int_{\partial M}h_{mi}(q)v_{\mu \nu \theta}^{i}\partial_{\eta}v_{\varphi}^{m}dS_{g}-\int_{\partial M}h_{mi}(q)v_{\mu \eta \theta}^{i}\Delta_{g}v_{\varphi}^{m}dV_{g} \\
    =&-2\int_{M}v_{\varphi}^{m}h_{mi}(q)g^{\alpha \beta} \displaystyle \sum_{\text{cyc}}[\partial_{\ell}\Gamma_{jk}^{i}(q)v_{\mu}^{\ell}\partial_{\alpha}v_{\nu}^{j}\partial_{\beta}v_{\theta}^{k}+\Gamma_{jk}^{i}(q) \partial_{\alpha}v_{\mu \nu}^{j}\partial_{\beta}v_{\theta}^{k} ]dV_{g},
\end{align*}    
where in the first two equalities we used integration by parts, and in the third one we used equation \eqref{eq:3_lin}, together with equation \eqref{eq:1_lin} and $v_{\mu \nu \theta}^{i}|_{\partial M}=0$ (from Lemma \ref{lemma:linear_eq} (2)).
\end{proof}

Finally, we have the formula for the $N$th linearization

\begin{lemma} [Integral identity for the $N$th linearization] \label{lemma:N_int_id}
    \begin{align*}
        &\int_{\partial M}f_{N+1}^{\ell}h_{\ell i}(q) 
 \frac{\partial^{N}}{\partial \eps_{1}\cdots \partial \eps_{N}}\Lambda^{i}(q+f)|_{\eps=0}dS_{g} \\
 &=-\int_{M}v_{N+1}^{\ell}h_{\ell i}(q)g^{\alpha \beta}\displaystyle\sum_{\substack{S \in 2^{[N]}\\ 0 \leq |S| \leq N-2}} \partial_{S}\Gamma_{jk}^{i} \sum_{\substack{T \in 2^{[N] \setminus S}\\ |T| \geq 1 \\ T \sqcup S \neq [N]}} \partial_{\alpha}v_{T}^{j} \partial_{\beta}v_{T^{c}}^{k} dV_{g}.
    \end{align*}
\end{lemma}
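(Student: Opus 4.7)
The plan is to mimic exactly the argument used for Lemmas \ref{lemma:first_int_id} and \ref{lemma:second_int_id}, with the only novelty being that the combinatorial sum appearing in the $N$th linearization \eqref{eq:n_lin} is treated as a single symbol. No new analytic ingredient is needed.

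First, I would identify the left hand side with a boundary integral involving the normal derivative of $v_{1\cdots N}$. Since $u_\eps$ solves \eqref{eq:Dir_prob} with boundary value $q+f_\eps$, the definition \eqref{eq:dn} gives $\Lambda(q+f_\eps)^i=\partial_\eta u_\eps^i|_{\partial M}$, so differentiating in $\eps_1,\dots,\eps_N$ and evaluating at $\eps=0$ yields
\[
\frac{\partial^{N}\Lambda^{i}(q+f_\eps)}{\partial\eps_{1}\cdots\partial\eps_{N}}\bigg|_{\eps=0}=\partial_{\eta}v_{1\cdots N}^{i}\quad\text{on }\partial M.
\]
Using that $v_{N+1}^{\ell}|_{\partial M}=f_{N+1}^{\ell}$ by \eqref{eq:1_lin}, the LHS of the identity becomes $\int_{\partial M} h_{\ell i}(q)\,v_{N+1}^{\ell}\,\partial_{\eta}v_{1\cdots N}^{i}\,dS_g$.

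Next, I would apply Green's identity twice. First,
\begin{align*}
\int_{\partial M} h_{\ell i}(q)\,v_{N+1}^{\ell}\,\partial_{\eta}v_{1\cdots N}^{i}\,dS_g
&=\int_{M} h_{\ell i}(q)\,v_{N+1}^{\ell}\,\Delta_{g}v_{1\cdots N}^{i}\,dV_g \\
&\quad+\int_{M} h_{\ell i}(q)\,g(\nabla v_{N+1}^{\ell},\nabla v_{1\cdots N}^{i})\,dV_g.
\end{align*}
Integrating by parts once more in the gradient term moves the Laplacian onto $v_{N+1}^{\ell}$ at the cost of a boundary term $\int_{\partial M} h_{\ell i}(q)\,v_{1\cdots N}^{i}\,\partial_{\eta}v_{N+1}^{\ell}\,dS_g$. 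Both of those extra contributions vanish: the boundary term because $v_{1\cdots N}^{i}|_{\partial M}=0$ from Lemma \ref{lemma:linear_eq}(4), and the interior term because $\Delta_{g}v_{N+1}^{\ell}=0$ from Lemma \ref{lemma:linear_eq}(1).

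Finally, I would substitute the PDE \eqref{eq:n_lin} for $\Delta_g v_{1\cdots N}^{i}$, obtaining
\[
\Delta_{g}v_{1\cdots N}^{i}=-g^{\alpha\beta}\sum_{\substack{S\in 2^{[N]}\\ 0\leq|S|\leq N-2}} \partial_{S}\Gamma_{jk}^{i}\sum_{\substack{T\in 2^{[N]\setminus S}\\|T|\geq 1\\T\sqcup S\neq[N]}} \partial_{\alpha}v_{T}^{j}\,\partial_{\beta}v_{T^{c}}^{k},
\]
and plugging this into the volume integral yields the claimed identity with the minus sign on the right hand side.

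I do not foresee a genuine obstacle: the argument is structurally identical to the $N=2$ and $N=3$ cases, and the combinatorial sum over $(S,T)$ plays no role in the integration-by-parts step, where it is simply carried along as the inhomogeneity of the linear elliptic equation satisfied by $v_{1\cdots N}^i$. The only point that deserves care is to verify that the normal-derivative-of-$\Lambda$ formula commutes with iterated $\eps$-differentiation at $\eps=0$, which is immediate from the smooth dependence of $u_\eps$ on $\eps$ guaranteed by Theorem \ref{thm:wp}.
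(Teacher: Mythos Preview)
Your proposal is correct and follows essentially the same approach as the paper: both arguments integrate by parts twice, use $v_{1\cdots N}^{i}|_{\partial M}=0$ together with $\Delta_g v_{N+1}^{\ell}=0$ to kill the extra terms, and substitute the $N$th linearized equation \eqref{eq:n_lin} for $\Delta_g v_{1\cdots N}^{i}$. The only cosmetic difference is that the paper adds and subtracts the nonlinear sum before integrating by parts the second time, whereas you directly substitute the PDE at the end; these are equivalent.
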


\begin{proof}
As before, we have
\begin{align*}
    \int_{\partial M} &f_{N+1}^{\ell}h_{\ell i}(q)\partial_{\eta}v_{1\cdots N}^{i}dS_{g} \\
    =&\int_{M}f_{N+1}^{\ell}h_{\ell i}(q) \Delta_{g}v_{1\cdots N}^{i}dV_{g}+\int_{M}h_{\ell i}(q)g(\nabla v_{N+1}^{\ell},\nabla v_{1 \cdots N}^{i})dV_{g} \\
    =& \int_{M}f_{N+1}^{\ell}h_{\ell i}(q) \Delta_{g}v_{1\cdots N}^{i}dV_{g} \\
    &+\int_{M}v_{N+1}^{\ell}h_{\ell i}(q)g^{\alpha \beta}\displaystyle\sum_{\substack{S \in 2^{[N]}\\ 0 \leq |S| \leq N-2}} \partial_{S}\Gamma_{jk}^{i} \sum_{\substack{T \in 2^{[N] \setminus S}\\ |T| \geq 1 \\ T \sqcup S \neq [N]}} \partial_{\alpha}v_{T}^{j} \partial_{\beta}v_{T^{c}}^{k} dV_{g} \\
    &-\int_{M}v_{N+1}^{\ell}h_{\ell i}(q)g^{\alpha \beta}\displaystyle\sum_{\substack{S \in 2^{[N]}\\ 0 \leq |S| \leq N-2}} \partial_{S}\Gamma_{jk}^{i} \sum_{\substack{T \in 2^{[N] \setminus S}\\ |T| \geq 1 \\ T \sqcup S \neq [N]}} \partial_{\alpha}v_{T}^{j}\partial_{\beta}v_{T}^{k} dV_{g} \\
    &+\int_{\partial M} h_{i \ell}(q)v_{1\cdots N}^{i} \partial_{\eta}v_{N+1}^{\ell}dS_{g}-\int_{M} h_{\ell i}(q)v_{1\cdots N}^{i} \Delta_{g}v_{N+1}^{\ell}dV_{g} \\
    =& -\int_{M}v_{N+1}^{\ell}h_{\ell i}(q) g^{\alpha \beta}\displaystyle\sum_{\substack{S \in 2^{[N]}\\ 0 \leq |S| \leq N-2}} \partial_{S}\Gamma_{jk}^{i} \sum_{\substack{T \in 2^{[N] \setminus S}\\ |T| \geq 1 \\ T \sqcup S \neq [N]}} \partial_{\alpha}v_{T}^{j} \partial_{\beta}v_{T^{c}}^{k} dV_{g},
\end{align*}
where in the first two equalities we integrated by parts; and in the final one we used the $N$th linearized equation \eqref{eq:n_lin}, together with the facts that $v_{1\cdots N}^{i}|_{\partial M}=0$ (Lemma \ref{lemma:linear_eq} (4)), and that $v_{N+1}^{\ell}$ is harmonic (Lemma \ref{lemma:linear_eq} (1)).
\end{proof}

\section{Proof of the Main Theorem} \label{sec:pro}

We divide the analysis into several steps. First, using the first linearization of \eqref{eq:hm}, we determine the metric on the domain up to gauge. Then, using the second and the third linearization, we determine the Christoffel's symbols and their first derivatives on the target. Finally, we show how we can recover the full jet of the metric at $q$ using higher order linearizations.

\subsection{Determination of the metric on the domain} \label{sub:metric_domain}

First we determine the metric on the domain. We recall the following definitions

\begin{defin} \label{defin:cta}
Let $(M, g)$ be a compact manifold with smooth boundary with $\dim M \geq 3$
\begin{enumerate}
    \item $(M, g)$ is called \emph{transversally anisotropic} if $(M, g) \subset \subset(T, g)$ where $T=\mathbb{R} \times M_0$, $g=e \oplus g_0,(\mathbb{R}, e)$ is the Euclidean line, and $\left(M_0, g_0\right)$ is some compact $(n-1)$ dimensional manifold with boundary. Here $\left(M_0, g_0\right)$ is called the transversal manifold.
    \item $(M, g)$ is called \emph{conformally transversally anisotropic} (CTA) if $(M, c g)$ is transversally anisotropic for some smooth positive function $c$.
\end{enumerate}
\end{defin}

\begin{defin} \label{defin:ray_inj}
Let $(M,g)$ be a CTA manifold. We say that the geodesic X-ray transform on the transversal manifold $(M_{0}, g_{0})$ is \emph{injective} if any function $f \in C (M_{0})$ which integrates to zero over all non-tangential geodesics in $M_0$ must satisfy $f=0$. Here, a unit speed geodesic segment $\gamma \colon [0, L] \to M_{0}$ is \emph{called non-tangential} if $\dot{\gamma}(0), \dot{\gamma}(L)$ are non-tangential vectors on $\partial M_{0}$ and $\gamma(t) \in M_0^{\mathrm{int}}$ for $0<t<L$.    
\end{defin}

For more details about CTA manifolds we refer to \cites{dksu,dsfkls16, Cekic17, kls22}.

\begin{lemma} \label{lemma:metrics_domain}
    Let $M$, $g,\hat{g}$, $\Omega$, $h$, and $\hat{h}$ be as in Theorem \ref{thm:main}. Then \eqref{eq:rel_metrics_2} and \eqref{eq:rel_metrics_an} hold on the corresponding cases.
\end{lemma}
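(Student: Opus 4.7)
The plan is to reduce the lemma to the classical anisotropic Calder\'on problem by taking a single linearization of the Dirichlet-to-Neumann map at the constant map $\kappa_{q}$. Concretely, I would pick any smooth $f_{1}\colon\partial M\to\R^{n}$ supported where $q+\eps_{1}f_{1}$ stays in $B_{r}(q)$ for small $\eps_{1}$, and set $f_{\eps}=\eps_{1}f_{1}$. By Theorem \ref{thm:wp}, the solutions $u_{\eps}$ and $\hat{u}_{\eps}$ of \eqref{eq:Dir_prob} with metrics $(g,h)$ and $(\hat g,\hat h)$ depend smoothly on $\eps_{1}$ and reduce to $\kappa_{q}$ at $\eps_{1}=0$. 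Differentiating the hypothesis \eqref{eq:dir_1} in $\eps_{1}$ at $0$ and using Lemma \ref{lemma:linear_eq}(1), which says that each component $v_{1}^{i}$ is $g$-harmonic with boundary value $f_{1}^{i}$, one obtains that the scalar Laplace--Beltrami Dirichlet-to-Neumann maps agree:
\begin{equation*}
\Lambda_{g}^{\Delta} f_{1}^{i} \;=\; \Lambda_{\hat g}^{\Delta} f_{1}^{i}, \qquad i=1,\ldots,n,
\end{equation*}
for all $f_{1}^{i}\in C^{\infty}(\partial M)$ with sufficiently small sup-norm; by linearity of the operator this extends to all $C^{\infty}(\partial M)$.

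Once the classical DN maps for $\Delta_{g}$ and $\Delta_{\hat g}$ coincide, I would simply cite the existing rigidity theorems. For $\dim M=2$, the Lassas--Uhlmann theorem \cite{lu} gives a boundary-fixing diffeomorphism $F$ and a conformal factor $c$ with $c|_{\partial M}\equiv 1$ satisfying \eqref{eq:rel_metrics_2}. For $\dim M\ge 3$ with $(M,g),(M,\hat g)$ real-analytic up to the boundary, the same paper gives a boundary-fixing diffeomorphism with $F^{*}\hat g=g$, i.e.\ \eqref{eq:rel_metrics_an}. For the CTA case in a fixed conformal class with injective transversal ray transform, I would apply \cite{dsfkls16} to obtain $\hat g=g$.

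The only step requiring any care is the first one: one must check that differentiating \eqref{eq:dir_1} at $\eps=0$ legitimately produces equality of the scalar Laplace DN maps on all of $C^{\infty}(\partial M)$, not just on a small neighborhood of the zero function. This is immediate because the Laplacian is linear, so if $\Lambda_{g}^{\Delta}$ and $\Lambda_{\hat g}^{\Delta}$ agree on an open set of boundary data they agree everywhere by scaling. Everything else is just invoking known results, so I would not expect any substantive obstacle; the computation of the first linearization in Lemma \ref{lemma:linear_eq}(1) is what does all the work.
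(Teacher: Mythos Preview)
Your proposal is correct and follows essentially the same route as the paper: linearize once at $\kappa_{q}$ via Lemma~\ref{lemma:linear_eq}(1) to reduce to equality of the scalar Laplace--Beltrami DN maps, then invoke \cite{lu} (and \cite{dsfkls16} in the CTA case). Your explicit remark that linearity lets you pass from small boundary data to all of $C^{\infty}(\partial M)$ is a detail the paper leaves implicit, but otherwise the arguments coincide.
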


\begin{proof}
Let $f=f_{\eps}$ be as in \eqref{eq:boundary_value}. Here we just need $N_{0}=0$, but we can take $N_{0}$ to be larger if need, as we do in the higher linearizations. Since $\Lambda_{g,h}=\Lambda_{\hat{g},\hat{h}}$, by Lemma \ref{lemma:linear_eq}, we conclude that the Dirichlet-to-Neumann maps of the systems
\[
\begin{cases}
    \Delta_{g} v_{\mu}^i=0 & \text{in } M, \\
    v_{\mu}^{i}=f_{\mu}^{i} & \text{on } \partial M,
\end{cases}  
\qquad \begin{cases}
    \Delta_{\hat{g}} \hat{v}_{\mu}^i=0 & \text{in } M, \\
     \hat{v}_{\mu}^{i}=f_{\mu}^{i} & \text{on } \partial M,
\end{cases}
\]
are the same. Here $v$ and $\hat{v}$ denote the first linearization of the solution of \eqref{eq:hm} from $(M,g)$ to $(\Omega,h)$, and from $(M,\hat{g})$ to $(\Omega,\hat{h})$, respectively. In the case when $\dim M=2$, \cite{lu}*{Theorem 1.1 (i)} implies the existence of a diffeomorphism $F \colon M \to M$ with $F|_{\partial M}=\text{Id}|_{\partial M}$, and a non-negative function $c$ so that $c|_{\partial M} \equiv 1$, satisfying \eqref{eq:rel_metrics_2}. In the real analytic setting, the same analysis holds (applying \cite{lu}*{Theorem 1.1 (ii)} this time), but without the conformal factor, that is, \eqref{eq:rel_metrics_an} holds. Furthermore, in the CTA case, we have the same conclusion without the diffeomorphism nor the conformal factor, that is, \eqref{eq:rel_metrics_cta} holds.
\end{proof}

\begin{rmk}
We point out that the same analysis would work in the higher-dimensional case (without the analytical regularity assumptions or without the CTA and X-ray assumptions) if the Calder\'ons problem would be solved in higher dimensions.
\end{rmk}

\subsection{Determination of the first derivative of the target's metric} \label{sub:first_der}

By
\begin{equation} \label{eq:first_der_h}
    \partial_{\ell}h_{ij}=h_{rj}\Gamma_{i\ell}^{r}+h_{is}\Gamma_{\ell j}^{s},
\end{equation}
we observe that to determine the first derivatives of the metric on the codomain, it is enough to determine the Christoffel's symbols. Hence, we will deal with the equality of the Christoffel's symbols on the target. First, we obtain a relation between $v$ and $\hat{v}$. Indeed, let $\tilde{v}=F^{*}\hat{v}$. If we are in the 2-dimensional case, then $\Delta_{cg}=c^{-1}\Delta_{g}$. So,
\[ \Delta_{g}\tilde{v}_{\mu}^{i}=\Delta_{c^{-1}F^{*}\hat{g}}(v_{\mu} \circ F)^{i}=cF^{*}(\Delta_{\hat{g}} \hat{v}_{\mu}^{i})=0. \]
Since $F$ is the identity in the boundary, then $v$ and $\tilde{v}$ have the same boundary values. Therefore, $v$ and $\tilde{v}$ satisfy the same boundary value problem. By uniqueness of the
Dirichlet problem of elliptic operators (\cite{GT01}*{Theorem 6.15}) we conclude
\begin{equation} \label{eq:rel_first_lin}
    v=\tilde{v}
\end{equation}
As before, the same analysis applies in the analytic case (but without $c$). In the same spirit, we also have
\begin{equation} \label{eq:rel_first_lin_CTA}
    v=\hat{v},
\end{equation}
in the CTA case. Finally, observe that we also have, 
\begin{equation} \label{eq:rel_vols_2}
    F^{*}dV_{\hat{g}}=cdV_{g}
\end{equation}
in the 2-dimensional case, while
\begin{equation} \label{eq:rel_vols_n}
    F^{*}dV_{\hat{g}}=dV_{g}
\end{equation}
on the real analytic (and higher dimensional) case, whereas 
\begin{equation} \label{eq:rel_vols_cta}
    dV_{\hat{g}}=dV_{g},
\end{equation}
in the CTA case.

With this setting, we are ready to prove the main ingredient to determine the Christoffel's symbols.

\begin{lemma}[First Alessandrini's identity] \label{lemma:first_ale} In the setting of Theorem \ref{thm:main}, we have
\begin{equation} \label{eq:ale_1_full}
    \int_{M}h_{\ell i}(q)(\Gamma_{jk}^{i}(q)-\hat{\Gamma}_{jk}^{i}(q))v_{\theta}^{\ell}g^{\alpha \beta}\partial_{\alpha}v_{\mu}^{j}\partial_{\beta}v_{\nu}^{k}dV_{g}=0,
\end{equation}
    where $\Gamma$ and $\hat{\Gamma}$ corresponds to the Christoffel's symbols of $(\Omega,h)$ and $(\Omega,\hat{h})$, respectively.
\end{lemma}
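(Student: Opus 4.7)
The plan is to apply the second-linearization integral identity (Lemma~\ref{lemma:first_int_id}) to both pairs $(g,h)$ and $(\hat g,\hat h)$, match the resulting boundary integrals using the DN equality, and then transport the $(\hat g,\hat h)$-interior integral back to $(M,g)$ via the diffeomorphism $F$ furnished by Lemma~\ref{lemma:metrics_domain}. A crucial preliminary observation is that, inspecting the proof of Lemma~\ref{lemma:first_int_id}, the factor $h_{\ell i}(q)$ enters only as an arbitrary constant tensor pairing the indices $\ell$ and $i$; the identity remains valid if it is replaced by any other constant. I will therefore apply Lemma~\ref{lemma:first_int_id} to the $(\hat g,\hat h)$-system using the \emph{same} constant tensor $h_{\ell i}(q)$ (not $\hat h_{\ell i}(q)$), producing
\[
\int_{\partial M} f_\theta^\ell h_{\ell i}(q)\,\partial_{\eps_\mu \eps_\nu}\Lambda_{\hat g,\hat h}^i(q+f_\eps)|_{\eps=0}\,dS_{\hat g} = -2\int_M h_{\ell i}(q)\hat v_\theta^\ell\, \hat g^{\alpha\beta}\hat\Gamma_{jk}^i(q)\,\partial_\alpha \hat v_\mu^j\,\partial_\beta \hat v_\nu^k\,dV_{\hat g},
\]
to be paired with the analogous $(g,h)$-identity from Lemma~\ref{lemma:first_int_id}.

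The hypothesis $\Lambda_{g,h} = \Lambda_{\hat g,\hat h}$ forces all $\eps$-derivatives of the DN maps to agree, while Lemma~\ref{lemma:metrics_domain} (with $F|_{\partial M}=\mathrm{Id}$ and $c|_{\partial M}\equiv 1$) gives $g|_{\partial M}=\hat g|_{\partial M}$ in all three cases, so $dS_g=dS_{\hat g}$. The two left-hand sides therefore coincide, and hence so do the right-hand sides. I next transport the $(\hat g,\hat h)$-interior integral to $(M,g)$ via $F$: using $v_\mu = F^*\hat v_\mu$ from \eqref{eq:rel_first_lin} (or \eqref{eq:rel_first_lin_CTA} in the CTA case), together with the metric relations from Lemma~\ref{lemma:metrics_domain} and the volume-form identities \eqref{eq:rel_vols_2}--\eqref{eq:rel_vols_cta}, I obtain
\[
\int_M h_{\ell i}(q)\hat v_\theta^\ell\, \hat g^{\alpha\beta}\hat\Gamma_{jk}^i(q)\,\partial_\alpha \hat v_\mu^j\,\partial_\beta \hat v_\nu^k\,dV_{\hat g} = \int_M h_{\ell i}(q)\, v_\theta^\ell\, g^{\alpha\beta}\hat\Gamma_{jk}^i(q)\,\partial_\alpha v_\mu^j\,\partial_\beta v_\nu^k\,dV_g.
\]
Equating this with the right-hand side of the $(g,h)$-identity and dividing by $-2$ yields \eqref{eq:ale_1_full}.

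The only point that requires care is the two-dimensional case: here the pullback of $\hat g^{\alpha\beta}$ acquires a factor $c^{-1}$ while $dV_{\hat g}$ acquires a factor $c$ from \eqref{eq:rel_vols_2}, and the two cancel exactly. This is the familiar conformal invariance of the 2D Dirichlet quadratic form $\int_M g^{\alpha\beta}\partial_\alpha u\,\partial_\beta w\,dV_g$ applied to the scalar components $v_\mu^j$, $v_\nu^k$. The analytic case ($F^*\hat g=g$ with \eqref{eq:rel_vols_n}) reduces to a clean change of variables, and the CTA case ($F=\mathrm{Id}$, $g=\hat g$, $v=\hat v$) is immediate. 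No construction of CGO solutions or asymptotic analysis is needed at this step: the identity is a purely algebraic consequence of integration by parts, the DN hypothesis, and the domain-gauge from Lemma~\ref{lemma:metrics_domain}.
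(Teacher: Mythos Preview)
Your proof is correct and follows the same approach as the paper's: apply Lemma~\ref{lemma:first_int_id} to both systems, match the boundary integrals via the DN hypothesis (together with $dS_g=dS_{\hat g}$, which follows from $F|_{\partial M}=\mathrm{Id}$ and $c|_{\partial M}=1$), and then transport the hatted interior integral back to $(M,g)$ via $F$ using the domain-gauge relations and $v_\mu=F^*\hat v_\mu$. The only difference is cosmetic: the paper invokes the hypothesis $h(q)=\hat h(q)$ (part of the setting of Theorem~\ref{thm:main}) to replace $\hat h_{\ell i}(q)$ by $h_{\ell i}(q)$ when applying Lemma~\ref{lemma:first_int_id} to $(\hat g,\hat h)$, whereas you observe directly that the constant tensor in that lemma is arbitrary; both justifications are valid.
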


\begin{proof}
    We have 
\begin{align*}
    \int_{M}h_{\ell i}(q)\Gamma_{jk}^{i}(q)v_{\theta}^{\ell}g^{\alpha \beta}\partial_{\alpha}v_{\mu}^{j}\partial_{\beta}v_{\nu}^{k}dV_{g}&=\int_{M}h_{\ell i}(q)\hat{\Gamma}_{jk}^{i}(q)\hat{v}_{\theta}^{\ell}\hat{g}^{\alpha \beta}\partial_{\alpha}\hat{v}_{\mu}^{j}\partial_{\beta}\hat{v}_{\nu}^{k}dV_{\hat{g}} \\
    &=\int_{M} h_{\ell i}(q)\hat{\Gamma}_{jk}^{i}(q)\tilde{v}_{\theta}^{\ell}F^{*}\hat{g}^{\alpha \beta}\partial_{\alpha}\tilde{v}_{\mu}^{j}\partial_{\beta}\tilde{v}_{\nu}^{k} F^{*}dV_{\hat{g}} \\
    &=\int_{M} h_{\ell i}(q)\hat{\Gamma}_{jk}^{i}(q)\tilde{v}_{\theta}^{\ell}g^{\alpha \beta}\partial_{\alpha}\tilde{v}_{\mu}^{j}\partial_{\beta}\tilde{v}_{\nu}^{k} dV_{g} \\
    &=\int_{M} h_{\ell i}(q)\hat{\Gamma}_{jk}^{i}(q)v_{\theta}^{\ell}g^{\alpha \beta}\partial_{\alpha}v_{\mu}^{j}\partial_{\beta}v_{\nu}^{k} dV_{g},
\end{align*}
where in the first equality we used Lemma \ref{lemma:first_int_id}, that $h(q)=\hat{h}(q)$, and the fact that the second derivatives of $\Lambda_{g,h}=\Lambda_{\hat{g},\hat{h}}$ coincide and are the DN maps of the second linearized equations \eqref{eq:2_lin}; in the third equality we used \eqref{eq:rel_metrics_2} together with \eqref{eq:rel_vols_2} if $\dim M=2$, and \eqref{eq:rel_metrics_an} together with \eqref{eq:rel_vols_n} in the higher-dimensional and analytic case; and in the fourth equality, we used \eqref{eq:rel_first_lin}. The proof in the CTA case is more direct: we can go directly from the second to the fifth equality using \eqref{eq:rel_vols_cta} and \eqref{eq:rel_first_lin_CTA}.
\end{proof}

Now we determine the Christoffel's symbols.

\begin{lemma} \label{lemma:same_chris} 
In the setting of Theorem \ref{thm:main}
    $\Gamma_{jk}^{i}(q)=\hat{\Gamma}_{jk}^{i}(q)$.
\end{lemma}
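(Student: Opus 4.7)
The plan is to use Lemma \ref{lemma:first_ale} and isolate the individual Christoffel symbols by testing against appropriately chosen boundary data. Set $D_{jk}^{i} := \Gamma_{jk}^{i}(q) - \hat{\Gamma}_{jk}^{i}(q)$; the goal is $D_{jk}^{i}=0$ for all $i,j,k$. The Alessandrini identity reads
\[
\int_{M} h_{\ell i}(q)\, D_{jk}^{i}\, v_{\theta}^{\ell}\, g^{\alpha\beta} \partial_{\alpha} v_{\mu}^{j}\, \partial_{\beta} v_{\nu}^{k} \, dV_{g} = 0,
\]
and since $h_{\ell i}(q)D_{jk}^{i}$ is constant on $M$, the integrand is a constant coefficient times a function built from three harmonic functions whose boundary values can be chosen at will.

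First I fix target indices $\ell_{0}, j_{0}, k_{0}$ and take boundary data of the form $f_{\mu}^{j} = \phi\,\delta_{j,j_{0}}$, $f_{\nu}^{k} = \psi\,\delta_{k,k_{0}}$, and $f_{\theta}^{\ell} = \delta_{\ell,\ell_{0}}$, where $\phi,\psi \in C^{\infty}(\partial M)$ remain to be chosen. By Lemma \ref{lemma:linear_eq}(1), each component of the first linearization is the $g$-harmonic extension of the corresponding boundary value, so $v_{\mu}^{j} = \Phi\,\delta_{j,j_{0}}$, $v_{\nu}^{k} = \Psi\,\delta_{k,k_{0}}$, and $v_{\theta}^{\ell} = \delta_{\ell,\ell_{0}}$ (the harmonic extension of the constant $1$ being $1$), where $\Phi,\Psi$ are the $g$-harmonic extensions of $\phi,\psi$. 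Substituting collapses the sums and leaves
\[
\Bigl(\sum_{i} h_{\ell_{0} i}(q)\, D_{j_{0} k_{0}}^{i}\Bigr) \int_{M} g(\nabla \Phi, \nabla \Psi)\, dV_{g} = 0.
\]

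Next, I take $\phi=\psi$ to be any non-constant smooth function on $\partial M$, so that $\Phi=\Psi$ is a non-constant $g$-harmonic function and the scalar integral equals $\int_{M} |\nabla \Phi|_{g}^{2}\, dV_{g} > 0$. This forces $\sum_{i} h_{\ell_{0} i}(q)\, D_{j_{0} k_{0}}^{i} = 0$ for every $\ell_{0}\in\{1,\ldots,n\}$. Since $h(q)$ is positive definite, hence invertible, multiplying by $h^{\ell_{0} m}(q)$ and summing over $\ell_{0}$ gives $D_{j_{0} k_{0}}^{m} = 0$, and as $j_{0}, k_{0}, m$ were arbitrary, $\Gamma_{jk}^{i}(q) = \hat{\Gamma}_{jk}^{i}(q)$.

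No serious obstacle is expected: because the Christoffel symbols appear only at $q$ as \emph{constants} in the integral, one does not need any density result for gradient products of harmonic functions; a single judicious choice of harmonic test triple suffices. The only minor points to check are that the chosen smooth $f_{j}$ are admissible in \eqref{eq:boundary_value} (they are, since the smallness condition on $q+f_{\eps}\in B_{r}(q)$ falls on the parameters $\eps_{j}$, not on the functions $f_{j}$ themselves) and that the symmetry $D_{jk}^{i} = D_{kj}^{i}$ inherited from Levi-Civita Christoffel symbols is consistent with the argument, which it is; in particular no polarization step is needed since $j_{0}$ and $k_{0}$ are varied independently.
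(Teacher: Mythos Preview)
Your proof is correct and follows essentially the same approach as the paper's: both fix target indices, choose boundary data supported in single components (one constant and two equal non-constant functions), reduce the Alessandrini identity to a scalar multiple of a positive Dirichlet integral, and finish by inverting $h(q)$. The only cosmetic differences are that the paper uses a positive constant $C$ in place of your $1$ and phrases the final step via an unnecessary ``without loss of generality $\geq 0$'' case split; your version is in fact slightly cleaner.
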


\begin{proof}
Fix $j_{0},k_{0},\ell_{0} \in \{1,\ldots,n\}$. Let $a \in C^{\infty}(\partial M)$ to be non-constant, and let $C$ be a positive constant. Choose 
\[ f_{\mu}^{j}=\begin{cases}
    a & \text{if } j=j_{0}, \\
    0 & \text{in other case},
\end{cases} \quad f_{\nu}^{k}=\begin{cases}
    a & \text{if } k=j_{0}, \\
    0 & \text{in other case},
\end{cases} \quad f_{\theta}^{\ell}=\begin{cases}
    C & \text{if } \ell=\ell_{0}, \\
    0 & \text{in other case},
\end{cases} \]
Then, by the uniqueness of solutions to elliptic Dirichlet problems (\cite{GT01}*{Theorem 6.15}), we obtain that
\[ v_{\mu}^{j}=0 \text{ for } \{1,\ldots,n\} \setminus \{j_{0}\}, \quad v_{\nu}^{k}=0 \text{ for } \{1,\ldots,n\} \setminus \{k_{0}\},\]
and
\[ v_{\mu}^{j_{0}}=v_{\nu}^{k_{0}}, \qquad v_{\theta}^{\ell_{0}}=C. \]
Therefore, from Lemma \ref{lemma:first_ale}, we obtain 
\begin{equation} \label{eq:ale_1}
    Ch_{\ell_{0}i}(q)(\Gamma_{j_{0}k_{0}}^{i}(q)-\hat{\Gamma}_{j_{0}k_{0}}^{i}(q))\int_{M}g^{\alpha \beta}\partial_{\alpha}v_{\mu}^{j_{0}}\partial_{\beta}v_{\nu}^{j_{0}}dV_{g}=0.
\end{equation}
Observe that the difference between \eqref{eq:ale_1_full} and \eqref{eq:ale_1} is that in the first one, we are adding over $j$, $k$, and $\ell$, while in the second one, these values are fixed to be $j_{0}$, $k_{0}$, and $\ell_{0}$, respectively. Note that we can rewrite \eqref{eq:ale_1} as
\[ Ch_{\ell_{0}i}(q)(\Gamma_{j_{0}k_{0}}^{i}(q)-\hat{\Gamma}_{j_{0}k_{0}}^{i}(q))\int_{M}|dv_{\mu}^{j_{0}}|_{g}^{2}dV_{g}=0. \]
Now, assume without loss of generality that $h_{\ell_{0}i}(\Gamma_{j_{0}k_{0}}^{i}(q) -\hat{\Gamma}_{j_{0}k_{0}}^{i}(q)) \geq 0$. We have two cases here: 
\begin{equation} \label{eq:=chris}
    h_{\ell_{0}i}(q)(\Gamma_{j_{0}j_{0}}^{i}(q)-\hat{\Gamma}_{j_{0}j_{0}}^{i}(q))=0
\end{equation}
or $dv_{\mu}^{j_{0}}=0$. In the later case, we conclude that $v_{\mu}^{j_{0}}$ is constant, which implies that $a$ is constant as well. However, this contradicts our choice of $a$. Hence, \eqref{eq:=chris} must holds. We can repeat the whole proof for any $\ell_{0}$, which implies that $\Gamma_{jk}^{i}(q)=\hat{\Gamma}_{jk}^{i}(q)$. 
\end{proof}

We summarize our results of this section as follows.

\begin{prop} \label{prop:first_der}
    In the setting of Theorem \ref{thm:main}, we have $\partial_{\ell}h_{ij}(q)=\partial_{\ell}\hat{h}_{ij}(q)$.
\end{prop}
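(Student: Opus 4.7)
The plan is to combine the algebraic identity \eqref{eq:first_der_h} with the two ingredients already in hand: the hypothesis $h(q)=\hat h(q)$ from the ``furthermore'' clause of Theorem \ref{thm:main}, and the equality of Christoffel symbols at $q$ established in Lemma \ref{lemma:same_chris}. The crucial structural point is that \eqref{eq:first_der_h} expresses $\partial_\ell h_{ij}$ as a purely \emph{algebraic} combination of components of $h$ and of $\Gamma$, with no derivatives of $h$ appearing on the right-hand side. This identity is nothing other than the metric compatibility $\nabla h=0$ for the Levi-Civita connection, and the same identity holds for $\hat h$ with its own Christoffel symbols $\hat\Gamma$.

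Concretely, I would evaluate \eqref{eq:first_der_h} at $q$ for both metrics:
\[
\partial_\ell h_{ij}(q)=h_{rj}(q)\Gamma_{i\ell}^{r}(q)+h_{is}(q)\Gamma_{\ell j}^{s}(q),\qquad \partial_\ell \hat h_{ij}(q)=\hat h_{rj}(q)\hat\Gamma_{i\ell}^{r}(q)+\hat h_{is}(q)\hat\Gamma_{\ell j}^{s}(q).
\]
Substituting $h(q)=\hat h(q)$ and $\Gamma(q)=\hat\Gamma(q)$ makes the right-hand sides agree term by term, which yields $\partial_\ell h_{ij}(q)=\partial_\ell \hat h_{ij}(q)$ for all indices $i,j,\ell$.

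At this stage there is no real obstacle, since the substantive work (the linearization argument, the Alessandrini-type identity of Lemma \ref{lemma:first_ale}, and the choice of boundary data that decouples indices in Lemma \ref{lemma:same_chris}) is already done. The only content of this proposition is the observation that the value of the metric and of the Christoffel symbols at a point determine the first-order jet of the metric at that point, via the compatibility equation. This same principle, iterated by differentiating \eqref{eq:first_der_h} and using higher-order linearizations to recover successive derivatives of $\Gamma$, will serve as the template for recovering the full jet of $h$ at $q$ in the subsequent subsections.
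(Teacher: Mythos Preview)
Your proposal is correct and matches the paper's approach exactly: the paper notes at the start of Subsection~\ref{sub:first_der} that by \eqref{eq:first_der_h} it suffices to determine the Christoffel symbols, then proves Lemma~\ref{lemma:same_chris} and simply states Proposition~\ref{prop:first_der} as a summary. Your write-up just makes explicit the algebraic substitution of $h(q)=\hat h(q)$ and $\Gamma(q)=\hat\Gamma(q)$ into \eqref{eq:first_der_h}, which is precisely what the paper intends.
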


\subsection{Determination of the second derivative of the target's metric} \label{sub:second_der}

By taking the derivative on \eqref{eq:first_der_h}, we see that the second derivative of the metric at $q$ depends on the value of the metric, its first derivative, and the first derivative of the 
Christoffel's symbols, all valuated at $q$. By our hypothesis and the analysis of Section \ref{sub:first_der}, is enough to determine the first derivative of the Christoffel's symbols to obtain the result. As before, first we obtain a relation between $v_{\mu \nu}$ and $\hat{v}_{\mu \nu}$. Let $\tilde{v}=F^{*}\hat{v}$. If we are in the 2-dimensional case, then,
\begin{align*}
    \Delta_g \tilde{v}_{\mu \nu}^i+2g^{\alpha \beta}\Gamma_{jk}^{i}(q)\partial_{\alpha}v_{\mu}^{j}\partial_{\beta}v_{\nu}^{k} &=\Delta_{c^{-1}F^{*}\hat{g}} \tilde{v}_{\mu \nu}^i+2c(F^{*}\hat{g})^{\alpha \beta}\Gamma_{jk}^{i}(q)\partial_{\alpha}v_{\mu}^{j}\partial_{\beta}v_{\nu}^{k} \\
    &=\Delta_{c^{-1}F^{*}\hat{g}} \tilde{v}_{\mu \nu}^i+2c(F^{*}\hat{g})^{\alpha \beta}\Gamma_{jk}^{i}(q)\partial_{\alpha}\tilde{v}_{\mu}^{j}\partial_{\beta}\tilde{v}_{\nu}^{k} \\
    &=cF^{*} ( \Delta_{\hat{g}} \hat{v}_{\mu \nu}^i+2\hat{g}^{\alpha \beta}\Gamma_{jk}^{i}(q)\partial_{\alpha}\hat{v}_{\mu}^{j}\partial_{\beta}\hat{v}_{\nu}^{k} ) \\
    &=cF^{*} ( \Delta_{\hat{g}} \hat{v}_{\mu \nu}^i+2\hat{g}^{\alpha \beta}\hat{\Gamma}_{jk}^{i}(q)\partial_{\alpha}\hat{v}_{\mu}^{j}\partial_{\beta}\hat{v}_{\nu}^{k} ) \\
    &=0,
\end{align*}
were we used \eqref{eq:rel_metrics_2} on the first equality, \eqref{eq:rel_first_lin} on the second one, Lemma \ref{lemma:same_chris} on the fourth one, and Lemma \ref{lemma:linear_eq} (ii) one the last one. The same analysis works in the analytic case, by using \eqref{eq:rel_metrics_an} instead of \eqref{eq:rel_metrics_2}. Since $F$ is the identity in the boundary, then $v_{\mu \nu}$ and $\tilde{v}_{\mu \nu}$ have the same boundary values. Therefore,
\begin{equation} \label{eq:rel_second_lin}
    v_{\mu \nu}=\tilde{v}_{\mu \nu}
\end{equation}
by uniqueness of the Dirichlet problem for elliptic PDE (\cite{GT01}*{Theorem 6.15}). The same procedure works in the CTA (case since $F=Id_{M}$), and we obtain
\begin{equation} \label{eq:rel_second_lin_CTA}
    v_{\mu \nu}=\tilde{v}_{\mu \nu}
\end{equation}

We also need an identity that relates the first derivatives of the Christoffel's symbols.

\begin{lemma}[Second Alessandrini's identity] \label{lemma:second_ale} In the setting of Theorem \ref{thm:main}, we have
\[ \int_{M}v_{\varphi}^{m}h_{mi}(q)g^{\alpha \beta} (\partial_{\ell}\Gamma_{jk}^{i}(q)-\partial_{\ell}\hat{\Gamma}_{jk}^{i}(q))\sum_{\mathrm{cyc}}v_{\mu}^{\ell}\partial_{\alpha}v_{\nu}^{j}\partial_{\beta}v_{\theta}^{k}dV_{g}=0, \]
where $\Gamma$ and $\hat{\Gamma}$ corresponds to the Christoffel's symbols of $(\Omega,h)$ and $(\Omega,\hat{h})$, respectively, and the cyclic sum is over $(\mu,\nu,\theta)$.
\end{lemma}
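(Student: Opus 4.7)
The plan is to mirror the proof of Lemma~\ref{lemma:first_ale} (the First Alessandrini's identity), but starting from the third-order integral identity of Lemma~\ref{lemma:second_int_id} in place of the second-order one. First I would apply Lemma~\ref{lemma:second_int_id} separately to $(M,g,h)$ and $(M,\hat g,\hat h)$, with boundary data $f_\mu,f_\nu,f_\theta,f_\varphi$ as in \eqref{eq:boundary_value}. The two left-hand sides agree because $\Lambda_{g,h}=\Lambda_{\hat g,\hat h}$ (so their third $\eps$-derivatives at $\eps=0$ coincide), $h(q)=\hat h(q)$, and $g|_{\partial M}=\hat g|_{\partial M}$ (the last from Lemma~\ref{lemma:metrics_domain}, since $F|_{\partial M}=\mathrm{Id}$ and $c|_{\partial M}\equiv 1$ in the relevant cases, and trivially in the CTA case). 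Hence the bulk integrals on their right-hand sides must also be equal.

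Next I would pull back the $(\hat g,\hat h)$-integral to $M$ via $F$, exactly as in Lemma~\ref{lemma:first_ale}. The key algebraic identity $F^*\hat g^{\alpha\beta}\,F^*dV_{\hat g}=g^{\alpha\beta}\,dV_g$ follows from \eqref{eq:rel_metrics_2} combined with \eqref{eq:rel_vols_2} in the two-dimensional case, from \eqref{eq:rel_metrics_an} combined with \eqref{eq:rel_vols_n} in the analytic case, and is automatic in the CTA case via \eqref{eq:rel_vols_cta}. Using this, together with the identifications $\tilde v_\mu=v_\mu$ from \eqref{eq:rel_first_lin}/\eqref{eq:rel_first_lin_CTA} and $\tilde v_{\mu\nu}=v_{\mu\nu}$ from \eqref{eq:rel_second_lin}/\eqref{eq:rel_second_lin_CTA}, the pulled-back integrand becomes structurally identical to the $(M,g,h)$ integrand, the only differences being $\Gamma(q)$ vs.\ $\hat\Gamma(q)$ and $\partial_\ell\Gamma(q)$ vs.\ $\partial_\ell\hat\Gamma(q)$.

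Finally I would invoke Lemma~\ref{lemma:same_chris}, which gives $\Gamma_{jk}^i(q)=\hat\Gamma_{jk}^i(q)$, to cancel the cyclic-sum terms of the form $\Gamma(q)\partial_\alpha v_{\mu\nu}^j\partial_\beta v_\theta^k$ on each side against their hatted counterparts. What remains is precisely the stated identity, with the integrand proportional to $\partial_\ell\Gamma_{jk}^i(q)-\partial_\ell\hat\Gamma_{jk}^i(q)$ summed cyclically over $(\mu,\nu,\theta)$. The only non-routine ingredient is the identity $\tilde v_{\mu\nu}=v_{\mu\nu}$, but this was already established in Subsection~\ref{sub:second_der} using \eqref{eq:rel_first_lin}, Lemma~\ref{lemma:same_chris}, and well-posedness of the Dirichlet problem for the second linearization~\eqref{eq:2_lin}; once that is in hand, the remainder of the argument is a direct symbolic manipulation exactly parallel to the proof of Lemma~\ref{lemma:first_ale}.
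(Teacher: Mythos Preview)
Your proposal is correct and follows essentially the same route as the paper: apply Lemma~\ref{lemma:second_int_id} to both pairs, equate via $\Lambda_{g,h}=\Lambda_{\hat g,\hat h}$ and $h(q)=\hat h(q)$, pull back by $F$ using \eqref{eq:rel_metrics_2}/\eqref{eq:rel_vols_2} (or the analytic/CTA analogues), replace $\tilde v$ by $v$ via \eqref{eq:rel_first_lin} and \eqref{eq:rel_second_lin}, and finally cancel the $\Gamma(q)$-terms using Lemma~\ref{lemma:same_chris}. The only cosmetic difference is that you explicitly justify $dS_g=dS_{\hat g}$ on $\partial M$, which the paper leaves implicit.
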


\begin{proof}
As before, this time we obtain 
\begin{align*}
    &\int_{M}v_{\varphi}^{m}h_{mi}(q)g^{\alpha \beta} \displaystyle \sum_{\text{cyc}}[\partial_{\ell}\Gamma_{jk}^{i}(q)v_{\mu}^{\ell}\partial_{\alpha}v_{\nu}^{j}\partial_{\beta}v_{\theta}^{k}+\Gamma_{jk}^{i}(q) \partial_{\alpha}v_{\mu \nu}^{j}\partial_{\beta}v_{\theta}^{k} ]dV_{g} \\
    =&\int_{M}\hat{v}_{\varphi}^{m}h_{mi}(q)\hat{g}^{\alpha \beta} \displaystyle \sum_{\text{cyc}}[\partial_{\ell}\hat{\Gamma}_{jk}^{i}(q)\hat{v}_{\mu}^{\ell}\partial_{\alpha}\hat{v}_{\nu}^{j}\partial_{\beta}\hat{v}_{\theta}^{k}+\hat{\Gamma}_{jk}^{i}(q) \partial_{\alpha}\hat{v}_{\mu \nu}^{j}\partial_{\beta}\hat{v}_{\theta}^{k} ]dV_{\hat{g}} \\
    =&\int_{M}\tilde{v}_{\varphi}^{m}h_{mi}(q)F^{*}\hat{g}^{\alpha \beta} \displaystyle \sum_{\text{cyc}}[\partial_{\ell}\hat{\Gamma}_{jk}^{i}(q)\tilde{v}_{\mu}^{\ell}\partial_{\alpha}\tilde{v}_{\nu}^{j}\partial_{\beta}\tilde{v}_{\theta}^{k}+\hat{\Gamma}_{jk}^{i}(q) \partial_{\alpha}\tilde{v}_{\mu \nu}^{j}\partial_{\beta}\tilde{v}_{\theta}^{k} ]F^{*}dV_{\hat{g}}\\
    =&\int_{M}\tilde{v}_{\varphi}^{m}h_{mi}(q)g^{\alpha \beta} \displaystyle \sum_{\text{cyc}}[\partial_{\ell}\hat{\Gamma}_{jk}^{i}(q)\tilde{v}_{\mu}^{\ell}\partial_{\alpha}\tilde{v}_{\nu}^{j}\partial_{\beta}\tilde{v}_{\theta}^{k}+\hat{\Gamma}_{jk}^{i}(q) \partial_{\alpha}\tilde{v}_{\mu \nu}^{j}\partial_{\beta}\tilde{v}_{\theta}^{k} ]dV_{g}\\
    =&\int_{M}v_{\varphi}^{m}h_{mi}(q)g^{\alpha \beta} \displaystyle \sum_{\text{cyc}}[\partial_{\ell}\hat{\Gamma}_{jk}^{i}(q)v_{\mu}^{\ell}\partial_{\alpha}v_{\nu}^{j}\partial_{\beta}v_{\theta}^{k}+\hat{\Gamma}_{jk}^{i}(q) \partial_{\alpha}v_{\mu \nu}^{j}\partial_{\beta}v_{\theta}^{k} ]dV_{g} \\
    =&\int_{M}v_{\varphi}^{m}h_{mi}(q)g^{\alpha \beta} \displaystyle \sum_{\text{cyc}}[\partial_{\ell}\hat{\Gamma}_{jk}^{i}(q)v_{\mu}^{\ell}\partial_{\alpha}v_{\nu}^{j}\partial_{\beta}v_{\theta}^{k}+\Gamma_{jk}^{i}(q) \partial_{\alpha}v_{\mu \nu}^{j}\partial_{\beta}v_{\theta}^{k} ]dV_{g},
\end{align*}
where the first equality we used Lemma \ref{lemma:second_int_id}, that $h(q)=\hat{h}(q)$, and the fact that the third derivatives of $\Lambda_{g,h}=\Lambda_{\hat{g},\hat{h}}$ coincide and are the DN maps of the third linearized equations \eqref{eq:3_lin}; in the third equality we used \eqref{eq:rel_metrics_2} together with \eqref{eq:rel_vols_2} if $\dim M=2$, and \eqref{eq:rel_metrics_an} together with \eqref{eq:rel_vols_n} in the higher dimensional and analytic case; in the fourth we used \eqref{eq:rel_first_lin} and \eqref{eq:rel_second_lin}; and in the last equality we used Lemma \ref{lemma:same_chris}. Again, the proof in the CTA case is more direct: we can go directly from the first to the fourth equality using \eqref{eq:rel_vols_cta} and \eqref{eq:rel_first_lin_CTA}, and then apply Lemma \ref{lemma:same_chris} to obtain the last equality. Finally, canceling the repeated terms gives the required equality.
\end{proof}

\begin{lemma} \label{lemma:same_chris_der1}
In the setting of Theorem \ref{thm:main}, we have $\partial_{\ell}\Gamma_{jk}^{i}(q)=\partial_{\ell}\hat{\Gamma}_{jk}^{i}(q)$.
\end{lemma}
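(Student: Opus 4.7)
The plan is to mimic the strategy of Lemma \ref{lemma:same_chris}, this time feeding the Second Alessandrini identity (Lemma \ref{lemma:second_ale}) boundary data engineered to kill the cyclic sum down to a single term whose coefficient is one fixed component of $\partial_\ell\Gamma^i_{jk}(q) - \partial_\ell\hat\Gamma^i_{jk}(q)$.

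Fix indices $j_0, k_0, \ell_0, m_0 \in \{1,\ldots,n\}$, positive constants $C, D$, and a non-constant $a \in C^\infty(\partial M)$. Choose boundary data
\[
f_\mu^\ell = \begin{cases} C & \ell=\ell_0 \\ 0 & \text{otherwise} \end{cases}, \quad f_\nu^j = \begin{cases} a & j=j_0 \\ 0 & \text{otherwise} \end{cases}, \quad f_\theta^k = \begin{cases} a & k=k_0 \\ 0 & \text{otherwise} \end{cases}, \quad f_\varphi^m = \begin{cases} D & m=m_0 \\ 0 & \text{otherwise} \end{cases}.
\]
By uniqueness for the Dirichlet problem for $\Delta_g$ (\cite{GT01}*{Theorem 6.15}), the corresponding first linearizations satisfy $v_\mu^{\ell_0}\equiv C$, $v_\varphi^{m_0}\equiv D$, $v_\nu^{j_0} = v_\theta^{k_0} = w$ (the harmonic extension of $a$), and all other components vanish. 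Crucially, $v_\mu^{\ell_0}$ is \emph{constant} on $M$, so $\partial_\alpha v_\mu^{\ell_0} = \partial_\beta v_\mu^{\ell_0} = 0$. This annihilates the two cyclic summands in Lemma \ref{lemma:second_ale} that contain a derivative of $v_\mu$; only $v_\mu^\ell \partial_\alpha v_\nu^j \partial_\beta v_\theta^k$ survives. Collapsing the remaining sums over $\ell, j, k, m$ via the Kronecker-delta structure of the boundary data reduces the identity to
\[
CD\, h_{m_0 i}(q) \bigl(\partial_{\ell_0}\Gamma_{j_0 k_0}^i(q) - \partial_{\ell_0}\hat\Gamma_{j_0 k_0}^i(q)\bigr) \int_M |dw|_g^2\, dV_g = 0.
\]
Since $a$ is non-constant, so is $w$, and the integral is strictly positive. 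Therefore $h_{m_0 i}(q)\bigl(\partial_{\ell_0}\Gamma_{j_0 k_0}^i(q) - \partial_{\ell_0}\hat\Gamma_{j_0 k_0}^i(q)\bigr) = 0$. Varying $m_0$ and using non-degeneracy of the matrix $(h_{m_0 i}(q))$ yields $\partial_{\ell_0}\Gamma_{j_0 k_0}^i(q) = \partial_{\ell_0}\hat\Gamma_{j_0 k_0}^i(q)$ for every $i$; arbitrariness of $j_0, k_0, \ell_0$ gives the lemma.

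The main subtlety is that the cyclic sum in Lemma \ref{lemma:second_ale} mixes $\partial_\ell\Gamma^i_{jk}-\partial_\ell\hat\Gamma^i_{jk}$ evaluated at cyclic permutations of the index triple $(\ell_0,j_0,k_0)$, so merely freezing indices (as in Lemma \ref{lemma:same_chris}) does not isolate a single scalar coefficient. The decisive idea is to take the $v_\mu$-data \emph{constant} on $M$ rather than just supported in one component: this preserves its undifferentiated occurrence in the first cyclic term while simultaneously killing its differentiated occurrences in the remaining two.
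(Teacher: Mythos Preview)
Your proof is correct and follows essentially the same approach as the paper's own proof: the same boundary data are chosen (the paper takes $D=C$, a trivial variation), and the identity from Lemma \ref{lemma:second_ale} collapses to the same one-term expression. Your explicit explanation of why the two cyclic summands involving $\partial v_\mu$ vanish is a welcome clarification that the paper leaves implicit.
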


\begin{proof}
Fix $j_{0},k_{0},\ell_{0},m_{0} \in \{1,\ldots,n\}$. Let $a \in C^{\infty}(\partial M)$ with $a$ to be non-constant, and let $C$ be a positive constant. Choose 
\[ f_{\mu}^{\ell}=\begin{cases}
    C & \text{if } \ell=\ell_{0}, \\
    0 & \text{in other case},
\end{cases} \quad f_{\nu}^{j}=\begin{cases}
    a & \text{if } j=j_{0}, \\
    0 & \text{in other case},
\end{cases}\]
\[ f_{\theta}^{k}=\begin{cases}
    a & \text{if } k=k_{0}, \\
    0 & \text{in other case},
\end{cases} \quad f_{\varphi}^{m}=\begin{cases}
    C & \text{if } m=m_{0}, \\
    0 & \text{in other case}.
\end{cases} \]
Then, by uniqueness of solutions to elliptic Dirichlet problems (\cite{GT01}*{Theorem 6.15}), we obtain that
\[ v_{\mu}^{\ell}=v_{\nu}^{j}=v_{\theta}^{k}=v_{\varphi}^{m}=0,\]
$\ell \in \{1,\ldots,n \} \setminus \{\ell_{0}\}$, $j \in \{1,\ldots,n \} \setminus \{j_{0}\}$, $k \in \{1,\ldots,n \} \setminus \{k_{0}\}$, and $m \in \{1,\ldots,n \} \setminus \{m_{0}\}$, respectively, and
\[ v_{\varphi}^{m_{0}}=v_{\mu}^{\ell_{0}}=C, \qquad v_{\nu}^{j_{0}}=v_{\theta}^{k_{0}}. \]
Then, by Lemma \ref{lemma:second_ale}, we have
\[ C^{2}h_{m_{0}i}(q)(\partial_{\ell_{0}}\Gamma_{j_{0}k_{0}}^{i}(q)-\partial_{\ell_{0}}\hat{\Gamma}_{j_{0}k_{0}}^{i}(q)) \int_{M} |dv_{\nu}^{j_{0}}|_{g}^{2}dV_{g}=0. \]
Assume now without loss of generality that $h_{m_{0}i}(q)(\partial_{\ell_{0}}\Gamma_{j_{0}k_{0}}^{i}(q)-\partial_{\ell_{0}}\hat{\Gamma}_{j_{0}k_{0}}^{i}(q)) \geq 0$. If this equality is strict, then $dv_{\nu}^{j_{0}} \equiv 0$, which implies that $v_{\nu}^{j_{0}}$ is constant, and hence $a$ is constant as well. This contradiction implies 
\[ h_{m_{0}i}(q)(\partial_{\ell_{0}}\Gamma_{j_{0}k_{0}}^{i}(q)-\partial_{\ell_{0}}\hat{\Gamma}_{j_{0}k_{0}}^{i}(q))= 0. \]
Since $m_{0}$ was arbitrary, the same proof yields
\[ h_{mi}(q)(\partial_{\ell_{0}}\Gamma_{j_{0}k_{0}}^{i}(q)-\partial_{\ell_{0}}\hat{\Gamma}_{j_{0}k_{0}}^{i}(q))= 0 \qquad \forall m \in \{1,\ldots,n\}. \]
Then, we can invert $h$ to obtain $\partial_{\ell_{0}}\Gamma_{j_{0}k_{0}}^{i}(q)=\partial_{\ell_{0}}\hat{\Gamma}_{j_{0}k_{0}}^{i}(q)$.
Finally, since $\ell_{0},j_{0},k_{0}$ were arbitrary, we obtain the desire conclusion.    
\end{proof}

There results on this section imply:

\begin{prop} \label{prop:second_der}
In the setting of Theorem \ref{thm:main}, we have $\partial_{\ell k}h_{ij}(p)=\partial_{\ell k}\hat{h}_{ij}(p)$.
\end{prop}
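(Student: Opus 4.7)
The plan is to obtain the equality of second derivatives of the metric at $q$ by differentiating the algebraic identity \eqref{eq:first_der_h} one more time and then invoking the previously established equalities of metric values, first derivatives, Christoffel symbols, and first derivatives of Christoffel symbols at $q$. Concretely, starting from
\[
\partial_{\ell}h_{ij}=h_{rj}\Gamma_{i\ell}^{r}+h_{is}\Gamma_{\ell j}^{s},
\]
I would differentiate with respect to $x^{k}$ to obtain
\[
\partial_{k}\partial_{\ell}h_{ij}
=(\partial_{k}h_{rj})\Gamma_{i\ell}^{r}+h_{rj}(\partial_{k}\Gamma_{i\ell}^{r})
+(\partial_{k}h_{is})\Gamma_{\ell j}^{s}+h_{is}(\partial_{k}\Gamma_{\ell j}^{s}),
\]
with the analogous identity holding for $\hat{h}$ and its Christoffel symbols $\hat{\Gamma}$.

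Evaluating both identities at $q$, the hypothesis $h(q)=\hat{h}(q)$ together with Proposition \ref{prop:first_der} gives $\partial_{k}h_{ij}(q)=\partial_{k}\hat{h}_{ij}(q)$, Lemma \ref{lemma:same_chris} gives $\Gamma_{jk}^{i}(q)=\hat{\Gamma}_{jk}^{i}(q)$, and Lemma \ref{lemma:same_chris_der1} gives $\partial_{k}\Gamma_{ij}^{r}(q)=\partial_{k}\hat{\Gamma}_{ij}^{r}(q)$. Substituting these four equalities term by term into the two right-hand sides yields
\[
\partial_{k}\partial_{\ell}h_{ij}(q)=\partial_{k}\partial_{\ell}\hat{h}_{ij}(q),
\]
which is the desired conclusion. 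There is no real obstacle here; the content of the proposition lies entirely in the preceding lemmas, and the present step is only the routine bookkeeping that turns equality of Christoffel symbols and their first derivatives, plus equality of $h$ and $\partial h$, into equality of $\partial^{2}h$ at $q$.
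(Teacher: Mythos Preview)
Your proposal is correct and follows exactly the approach the paper takes: the paper notes at the start of Section~\ref{sub:second_der} that differentiating \eqref{eq:first_der_h} expresses $\partial^{2}h(q)$ in terms of $h(q)$, $\partial h(q)$, $\Gamma(q)$, and $\partial\Gamma(q)$, and then invokes the hypothesis together with Proposition~\ref{prop:first_der}, Lemma~\ref{lemma:same_chris}, and Lemma~\ref{lemma:same_chris_der1}. Your write-up simply makes this bookkeeping explicit.
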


\subsection{Determination of the jet of target's metric} \label{sub:jet}
In this sectionm we finish the proof of Theorem \ref{thm:main}, by determining the full jet of the metric on the codomain.

First, we point out the following fact.

\begin{lemma} \label{lemma:ind_der}
    The $N$th derivative of $h$ at $q$ depends on the $k$th derivatives of $h$ and $\Gamma_{ij}^{k}$ at $q$ only, where $k \in \{0,\ldots,N-1\}$ only.
\end{lemma}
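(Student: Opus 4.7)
The plan is to derive the conclusion directly from the algebraic identity \eqref{eq:first_der_h},
\[ \partial_{\ell}h_{ij}=h_{rj}\Gamma_{i\ell}^{r}+h_{is}\Gamma_{\ell j}^{s}, \]
which expresses the first derivative of $h$ as a bilinear combination of the zeroth derivatives of $h$ and $\Gamma$. In particular, the case $N=1$ of the lemma is immediate, since the right-hand side involves no derivatives at all.

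For $N \geq 2$, I would fix an arbitrary multi-index $I=(\alpha_1,\ldots,\alpha_{N-1})$ of length $N-1$ and apply $\partial_I := \partial_{\alpha_1}\cdots\partial_{\alpha_{N-1}}$ to both sides of \eqref{eq:first_der_h}. The left-hand side becomes $\partial_I \partial_\ell h_{ij}$, a generic $N$th derivative of $h$. On the right-hand side, the general Leibniz rule yields
\[ \partial_I\bigl(h_{rj}\Gamma_{i\ell}^{r}\bigr) \;=\; \sum_{J \subseteq I} \bigl(\partial_J h_{rj}\bigr)\bigl(\partial_{I\setminus J}\Gamma_{i\ell}^{r}\bigr), \]
and similarly for the second summand (with implicit summation over the repeated target indices). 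Since $|J|+|I\setminus J|=N-1$, every factor in every summand is a derivative of $h$ or of $\Gamma$ of order at most $N-1$. Evaluating at $q$, we conclude that $\partial_I\partial_\ell h_{ij}(q)$ is a polynomial expression in the values at $q$ of the $k$th derivatives of $h$ and of $\Gamma_{ij}^{k}$ for $k\in\{0,1,\ldots,N-1\}$, which is exactly the claim.

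There is no real obstacle here; the whole content of the lemma is that \eqref{eq:first_der_h} is purely algebraic (no derivatives of $h$ or $\Gamma$ appear on its right-hand side), so every additional differentiation of $h$ produces one additional derivative that the Leibniz rule distributes between the $h$-factor and the $\Gamma$-factor, keeping each individual factor at order strictly less than $N$.
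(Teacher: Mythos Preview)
Your argument is correct and matches the paper's approach: the paper omits the proof, noting only that it is by induction with \eqref{eq:first_der_h} giving the case $N=1$, which is exactly what you have written out in full via the Leibniz rule.
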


We omit the proof, which is by induction. For example, \eqref{eq:first_der_h} is the case $N=1$.

\begin{lemma} \label{lemma:rel_N_lin}
    Let $N \geq 2$. Consider the setting of Theorem \ref{thm:main}. Assume that 
    \begin{equation} \label{eq:rel_N-1_lin}
        \begin{cases}
            v_{\mu_{1}} &=\tilde{v}_{\mu_{1}}, \\ 
       v_{\mu_{1} \mu_{2}} &=\tilde{v}_{\mu_{1} \mu_{2}}, \\
       & \vdots \\
       v_{\mu_{1} \mu_{2} \cdots \mu_{N-1}} &=\tilde{v}_{\mu_{1} \mu_{2} \cdots \mu_{N-1}}, \\
        \end{cases}
    \end{equation}
    for any $\mu_{j} \in \{1,\ldots,N-1\}$. Assume further that any derivative of the Christoffel's symbols up to order $N-2$ coincide. Then 
    \begin{equation} \label{eq:rel_N_lin}
        v_{\mu_{1} \mu_{2} \cdots \mu_{N}} =\tilde{v}_{\mu_{1} \mu_{2} \cdots \mu_{N}}.
    \end{equation}
    In the CTA case, we consider the hypothesis with $F=Id_{M}$, so that the linearizations of $u$ and $\hat{u}$ coincide up to order $N-1$, and the conclusion is that the linearizations of $u$ and $\hat{u}$ coincide up to order $N$.
\end{lemma}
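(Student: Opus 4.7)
My plan is to set $\tilde v_{\mu_1\cdots\mu_N}:=F^*\hat v_{\mu_1\cdots\mu_N}$ and show that it satisfies the same boundary value problem \eqref{eq:n_lin} on $(M,g)$ with target $(\Omega,h)$ as $v_{\mu_1\cdots\mu_N}$; the equality $v_{\mu_1\cdots\mu_N}=\tilde v_{\mu_1\cdots\mu_N}$ will then follow from uniqueness for the Dirichlet problem for elliptic PDE \cite{GT01}*{Theorem 6.15}. The boundary values automatically match because $\hat v_{\mu_1\cdots\mu_N}|_{\partial M}=0$ by Lemma \ref{lemma:linear_eq}(4) and $F|_{\partial M}=Id_{\partial M}$, so everything reduces to checking that the PDE transfers.

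To do so, I would start from \eqref{eq:n_lin} for $\hat v_{\mu_1\cdots\mu_N}$ on $(M,\hat g)$ with target $(\Omega,\hat h)$ and apply $F^*$. By diffeomorphism invariance of the Laplace--Beltrami operator and of the pairing of one-forms induced by the metric, the pulled-back equation reads
\[ \Delta_{F^*\hat g}\tilde v^i_{\mu_1\cdots\mu_N}+(F^*\hat g)^{\alpha\beta}\sum_{\substack{S\subset[N]\\0\le|S|\le N-2}}F^*(\partial_S\hat\Gamma^i_{jk})\sum_{T}\partial_\alpha\tilde v^j_T\,\partial_\beta\tilde v^k_{T^c}=0. \]
In dimension two, using $F^*\hat g=cg$, $\Delta_{cg}=c^{-1}\Delta_g$, and $(cg)^{\alpha\beta}=c^{-1}g^{\alpha\beta}$, multiplying through by $c$ clears the conformal factor; in the analytic higher-dimensional case $F^*\hat g=g$, and in the CTA case $F=Id_M$ with $\hat g=g$, so this step is trivial. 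This mirrors the lower-order computation already carried out in Section \ref{sub:second_der} for the second linearization.

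The decisive point is the identification $F^*(\partial_S\hat\Gamma^i_{jk})=\partial_S\Gamma^i_{jk}$ (with $v$'s in place of $\hat v$'s) for every admissible $S$. Expanding $\partial_S\hat\Gamma^i_{jk}$ at $\eps=0$ by Fa\`a di Bruno, each term has the form $(\partial^p\hat\Gamma^i_{jk})(q)\prod_{a=1}^p\hat v^{\ell_a}_{T_a}$ for some set partition $\{T_a\}$ of $S$, so $p\le|S|\le N-2$ and $|T_a|\le|S|\le N-2$; the assumption on coincidence of Christoffel derivatives up to order $N-2$ handles the first factor, while the induction hypothesis \eqref{eq:rel_N-1_lin} gives $F^*\hat v_{T_a}=v_{T_a}$. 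Likewise, the constraints $|T|\ge1$, $T\sqcup S\ne[N]$ in \eqref{eq:n_lin} force $|T|,|T^c|\le N-1$, so the induction hypothesis also identifies $\partial_\alpha\tilde v^j_T$ and $\partial_\beta\tilde v^k_{T^c}$ with $\partial_\alpha v^j_T$ and $\partial_\beta v^k_{T^c}$. Assembling these identifications shows $\tilde v_{\mu_1\cdots\mu_N}$ satisfies exactly \eqref{eq:n_lin} on $(M,g,\Omega,h)$, and uniqueness concludes. The main obstacle I anticipate is the combinatorial bookkeeping in the Fa\`a di Bruno expansion, making sure every spatial derivative of $\hat\Gamma$ and every linearization appearing in it is of order at most $N-2$ or $N-1$, respectively, so that each factor is covered by exactly one of the standing hypotheses.
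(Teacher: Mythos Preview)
Your proposal is correct and follows essentially the same route as the paper: both arguments pull back the $N$th linearized equation for $\hat v$ under $F$, use the conformal/isometric relation between $F^*\hat g$ and $g$ to transfer the PDE to $(M,g)$, invoke the hypotheses on Christoffel derivatives (order $\le N-2$) and on lower linearizations (order $\le N-1$) to identify every ingredient in the nonlinear term, and conclude by uniqueness for the Dirichlet problem. The only cosmetic difference is that you run the chain of equalities starting from the $\hat{}$-equation and pulling back, whereas the paper starts from the expression in $(M,g)$ with $\tilde v$ inserted and shows it equals $cF^*$ of the $\hat{}$-equation; the Fa\`a di Bruno bookkeeping you describe is exactly what the paper encodes in its equations \eqref{eq:S=tildeS}--\eqref{eq:pullback_S}.
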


\begin{proof}
    Let us write
\[ v_{S}^{A}=v_{s_{1}}^{A_{1}}\cdots v_{s_{|S|}}^{A_{|S|}}, \]
so that
\[ \frac{\partial^{|S|}\Gamma_{jk}^{i}}{\prod_{j \in S}\partial\eps_{j}}\bigg|_{\eps=0}=\partial_{S}\Gamma_{jk}^{i}=\partial_{A}\Gamma_{jk}^{i}v_{S}^{A}+\text{l.o.t}. \]
We will write $\widetilde{\text{l.o.t.}}$ to denote the pullback by $F$ of the expression in $\text{l.o.t.}$. In this case, for $|S| \leq N-2$ we have
\begin{equation} \label{eq:S=tildeS}
    \partial_{\tilde{S}}\Gamma_{jk}^{i}:=\partial_{A}\Gamma_{jk}^{i}\tilde{v}_{S}^{A}+\widetilde{\text{l.o.t}}=\partial_{A}\Gamma_{jk}^{i}\tilde{v}_{S}^{A}+\text{l.o.t}=\partial_{S}\Gamma_{jk}^{i},
\end{equation}
where first we used the equality of the derivatives of the Christoffel's symbols and \eqref{eq:rel_N-1_lin}, and \eqref{eq:rel_N-1_lin} in the last step. Using the same notation as before but with $\hat{\Gamma}$, we obtain 
\begin{equation} \label{eq:S=hatS}
    \partial_{S}\hat{\Gamma}_{jk}^{i}=\partial_{A}\hat{\Gamma}_{jk}^{i}\hat{v}_{S}^{A}+\widehat{\text{l.o.t.}}=\partial_{A}\Gamma_{jk}^{i}\hat{v}_{S}^{A}+\widehat{\text{l.o.t.}}=:\partial_{\hat{S}}\Gamma_{jk}^{i},
\end{equation}
by equality of the derivatives of the Christoffel's symbols. Using our definitions of the pullback of the linearizations and out notations from \eqref{eq:S=tildeS} and \eqref{eq:S=hatS}, also conclude
\begin{equation} \label{eq:pullback_S}
    F^{*}(\partial_{\hat{S}}\Gamma_{jk}^{i})=F^{*}(\partial_{A}\Gamma_{jk}^{i}\hat{v}_{S}^{A}+\widehat{\text{l.o.t.}})=\partial_{A}\Gamma_{jk}^{i}\tilde{v}_{S}^{A}+\widetilde{\text{l.o.t.}}=\partial_{\tilde{S}}\Gamma_{jk}^{i}, 
\end{equation}
Hence, if we are in the 2-dimensional case,
\begin{align*}
    &\Delta_{g}\tilde{v}_{\mu_{1} \ldots \mu_{N}}^{i}+g^{\alpha \beta}\displaystyle\sum_{\substack{S \in 2^{[N]}\\ 0 \leq |S| \leq N-2}} \partial_{S} \Gamma_{jk}^{i} \sum_{\substack{T \in 2^{[N] \setminus S}\\ |T| \geq 1 \\ T \sqcup S \neq [N]}} \partial_{\alpha}v_{T}^{j} \partial_{\beta}v_{T^{c}}^{k} \\
    &=\Delta_{c^{-1}F^{*}\hat{g}}\tilde{v}_{\mu_{1} \ldots \mu_{N}}^{i}+cF^{*}\hat{g}^{\alpha \beta}\displaystyle\sum_{\substack{S \in 2^{[N]}\\ 0 \leq |S| \leq N-2}} \partial_{S} \Gamma_{jk}^{i} \sum_{\substack{T \in 2^{[N] \setminus S}\\ |T| \geq 1 \\ T \sqcup S \neq [N]}} \partial_{\alpha}v_{T}^{j} \partial_{\beta}v_{T^{c}}^{k} \\
    &=\Delta_{c^{-1}F^{*}\hat{g}}\tilde{v}_{\mu_{1} \ldots \mu_{N}}^{i}+cF^{*}\hat{g}^{\alpha \beta}\displaystyle\sum_{\substack{S \in 2^{[N]}\\ 0 \leq |S| \leq N-2}} \partial_{S} \Gamma_{jk}^{i} \sum_{\substack{T \in 2^{[N] \setminus S}\\ |T| \geq 1 \\ T \sqcup S \neq [N]}} \partial_{\alpha}\tilde{v}_{T}^{j} \partial_{\beta}\tilde{v}_{T^{c}}^{k} \\
    &=\Delta_{c^{-1}F^{*}\hat{g}}\tilde{v}_{\mu_{1} \ldots \mu_{N}}^{i}+cF^{*}\hat{g}^{\alpha \beta}\displaystyle\sum_{\substack{S \in 2^{[N]}\\ 0 \leq |S| \leq N-2}} \partial_{\tilde{S}} \Gamma_{jk}^{i} \sum_{\substack{T \in 2^{[N] \setminus S}\\ |T| \geq 1 \\ T \sqcup S \neq [N]}} \partial_{\alpha}\tilde{v}_{T}^{j} \partial_{\beta}\tilde{v}_{T^{c}}^{k} \\
    &=cF^{*}\left (\Delta_{\hat{g}}\hat{v}_{\mu_{1} \ldots \mu_{N}}^{i}+\hat{g}^{\alpha \beta}\displaystyle\sum_{\substack{S \in 2^{[N]}\\ 0 \leq |S| \leq N-2}} \partial_{\hat{S}} \Gamma_{jk}^{i} \sum_{\substack{T \in 2^{[N] \setminus S}\\ |T| \geq 1 \\ T \sqcup S \neq [N]}} \partial_{\alpha}\hat{v}_{T}^{j} \partial_{\beta}\hat{v}_{T^{c}}^{k} \right) \\
    &=cF^{*}\left (\Delta_{\hat{g}}\hat{v}_{\mu_{1} \ldots \mu_{N}}^{i}+\hat{g}^{\alpha \beta}\displaystyle\sum_{\substack{S \in 2^{[N]}\\ 0 \leq |S| \leq N-2}} \partial_{S} \hat{\Gamma}_{jk}^{i} \sum_{\substack{T \in 2^{[N] \setminus S}\\ |T| \geq 1 \\ T \sqcup S \neq [N]}} \partial_{\alpha}\hat{v}_{T}^{j} \partial_{\beta}\hat{v}_{T^{c}}^{k} \right) \\
    &=0,
\end{align*}
were we used \eqref{eq:rel_metrics_2} on the first equality,  \eqref{eq:rel_N-1_lin} on the second one, \eqref{eq:S=tildeS} on the third equality, \eqref{eq:pullback_S} on the fourth one, \eqref{eq:S=hatS} on the fifth equality, and Lemma \eqref{lemma:linear_eq} (4) on the last one. The same analysis applies in the analytic case, by using \eqref{eq:rel_metrics_an} instead of \eqref{eq:rel_metrics_2}, and similarly, the same analysis applies in the CTA case, by using \eqref{eq:rel_metrics_cta} instead of \eqref{eq:rel_metrics_2} (and is more direct since $F=Id_{M}$). Since $F$ is the identity in the boundary, we obtain \eqref{eq:rel_N_lin} in virtue of the uniqueness of the Dirichlet problem for elliptic PDE (\cite{GT01}*{Theorem 6.15}).
\end{proof} 

\begin{lemma}[General Alessandrini's identity] \label{lemma:general_ale}
    Let $N \geq 3$. Assume that the linearizations up to order $N$ of $u$ and $F^{*}\hat{u}$ coincide, and that the derivatives of Christoffel's symbols of $h$ and $\hat{h}$ at $q$ coincide up to order $N-2$. Then:
    \begin{equation} \label{eq:gen_al}
        \int_{M}v_{N+2}^{\ell}h_{\ell i}g^{\alpha \beta}\partial_{A_{1} \ldots A_{N-1}}(\Gamma_{jk}^{i}-\hat{\Gamma}_{jk}^{i})\sum_{\mathrm{cyc}} \left( \prod_{s=1}^{N-1}v_{s}^{A_{s}}\right)\partial_{\alpha}v_{\mu_{N}}^{j}\partial_{\beta}v_{\mu_{N+1}}^{k}dV_{g}=0,
    \end{equation}
    where the sum is cyclic over $(\mu_{1},\ldots,\mu_{N+1})$, and $h$ and the derivatives of the Christoffel's symbols are evaluated at $q$. In the CTA, since $F=Id_{M}$, the hypothesis means that the linearizations up to order $N$ of $u$ and $\hat{u}$ coincide.
\end{lemma}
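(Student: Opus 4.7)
The plan is to apply Lemma \ref{lemma:N_int_id} with its $N$ replaced by $N+1$ to the two triples $(M,g,h)$ and $(M,\hat g,\hat h)$, and then to subtract the two resulting identities. Since $\Lambda_{g,h}=\Lambda_{\hat g,\hat h}$ and $h(q)=\hat h(q)$, and since the gauge relations yield $F|_{\partial M}=\operatorname{Id}_{\partial M}$ together with $\hat g|_{\partial M}=g|_{\partial M}$ (so that $dS_{\hat g}=dS_g$), the boundary terms on the left-hand sides of Lemma \ref{lemma:N_int_id} coincide. Hence the two bulk integrals coincide. I would then pull back the $(\hat g,\hat h)$-integral by $F$, using $F^{*}\hat g=cg$ (resp.\ $F^{*}\hat g=g$, or $\hat g=g$ with $F=\operatorname{Id}_M$) and the corresponding relation on volume forms, together with the hypothesis that the linearizations up to order $N$ satisfy $F^{*}\hat v_T=v_T$ for every index set $T$ of size at most $N$. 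This produces a single identity on $M$ of the form
\[
\int_M v_{N+2}^{\ell}\,h_{\ell i}(q)\,g^{\alpha\beta}\sum_{\substack{S\subseteq[N+1]\\|S|\leq N-1}}\bigl(\partial_S\Gamma^i_{jk}-\partial_S\hat\Gamma^i_{jk}\bigr)\sum_{T}\partial_\alpha v_T^j\,\partial_\beta v_{T^c}^k\,dV_g=0,
\]
where the inner sum runs over the same admissible $T$'s as in Lemma \ref{lemma:N_int_id}.

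Next I would expand $\partial_S\Gamma^i_{jk}|_{\eps=0}$ via the Fa\`a di Bruno formula as
\[
\partial_{A_1\cdots A_{|S|}}\Gamma^i_{jk}(q)\prod_{s\in S}v_s^{A_s}+\text{l.o.t.},
\]
where the lower-order terms correspond to non-trivial set partitions of $S$ and therefore involve only derivatives of $\Gamma$ at $q$ of order strictly less than $|S|$, together with some linearizations of order strictly greater than $1$ but at most $|S|$. In the difference $\partial_S\Gamma-\partial_S\hat\Gamma$, every l.o.t.\ contribution carries either a derivative of the Christoffel symbols of order $\leq N-2$ (equal to its hatted counterpart by hypothesis) or a linearization of order $\leq|S|\leq N-1\leq N$ (equal to its pulled-back hatted version by hypothesis), so those terms cancel. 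Consequently only the top-order piece with $|S|=N-1$ survives, yielding the factor $\partial_{A_1\cdots A_{N-1}}(\Gamma^i_{jk}-\hat\Gamma^i_{jk})(q)\prod_{s\in S}v_s^{A_s}$.

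Finally, when $|S|=N-1$ the complement $[N+1]\setminus S$ has exactly two elements, and the admissibility conditions $|T|\geq 1$ and $T\sqcup S\neq[N+1]$ force $|T|=1$. The inner $T$-sum thus reduces to the two placements of the single remaining singleton among $\partial_\alpha v^j$ and $\partial_\beta v^k$. Combined with the outer sum over the $\binom{N+1}{N-1}$ choices of $S$, this assembles precisely into the cyclic sum over $(\mu_1,\ldots,\mu_{N+1})$ written in \eqref{eq:gen_al}. The main obstacle I expect is the Fa\`a di Bruno bookkeeping: certifying that every l.o.t.\ piece of $\partial_S\Gamma-\partial_S\hat\Gamma$ cancels for all $|S|\leq N-1$ is essentially an inductive check in the spirit of Lemma \ref{lemma:rel_N_lin}, and it is exactly the reason the hypothesis demands both the agreement of Christoffel derivatives up to order $N-2$ and the agreement of linearizations up to order $N$.
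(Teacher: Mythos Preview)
Your proposal is correct and follows essentially the same route as the paper: apply Lemma~\ref{lemma:N_int_id} at order $N+1$ to both systems, equate the boundary terms, pull back by $F$, use the metric/volume relations and the equality of linearizations up to order $N$ to rewrite everything in terms of $v$'s, and then cancel all contributions with $|S|\leq N-2$ (and the Fa\`a di Bruno lower-order pieces for $|S|=N-1$) using the hypothesis on the Christoffel derivatives. The paper packages the Fa\`a di Bruno bookkeeping into the notation $\partial_{\tilde S}\Gamma$, $\partial_{\hat S}\Gamma$ from \eqref{eq:S=tildeS}--\eqref{eq:pullback_S}, but the argument is the same; one small wording fix: in your cancellation step the l.o.t.\ terms carry \emph{both} a Christoffel derivative of order $\leq N-2$ \emph{and} linearizations of order $\leq |S|$, and you need both hypotheses simultaneously (not ``either/or'') to kill them.
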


\begin{proof}
We have
\begin{align*}
    \int_{M}&v_{N+2}^{\ell}h_{\ell i}(q)g^{\alpha \beta}\displaystyle\sum_{\substack{S \in 2^{[N+1]}\\ 0 \leq |S| \leq N-1}} \partial_{S}\Gamma_{jk}^{i}v_{S}^{A} \sum_{\substack{T \in 2^{[N+1] \setminus S}\\ |T| \geq 1 \\ T \sqcup S \neq [N+1]}} \partial_{\alpha}v_{T}^{j}
    \partial_{\beta}v_{T^{c}}^{k} dV_{g} \\
    =&\int_{M}\hat{v}_{N+2}^{\ell}h_{\ell i}(q)\hat{g}^{\alpha \beta}\displaystyle\sum_{\substack{S \in 2^{[N+1]}\\ 0 \leq |S| \leq N-1}} \partial_{S}\hat{\Gamma}_{jk}^{i}\sum_{\substack{T \in 2^{[N+1] \setminus S}\\ |T| \geq 1 \\ T \sqcup S \neq [N+1]}} \partial_{\alpha}\hat{v}_{T}^{j}
    \partial_{\beta}\hat{v}_{T^{c}}^{k} dV_{\hat{g}} \\
    =&\int_{M}\hat{v}_{N+2}^{\ell}h_{\ell i}(q)\hat{g}^{\alpha \beta}\displaystyle\sum_{\substack{S \in 2^{[N+1]}\\ |S| = N-1}} \partial_{S}\hat{\Gamma}_{jk}^{i}\sum_{\substack{T \in 2^{[N+1] \setminus S}\\ |T| \geq 1 \\ T \sqcup S \neq [N+1]}} \partial_{\alpha}\hat{v}_{T}^{j}
    \partial_{\beta}\hat{v}_{T^{c}}^{k} dV_{\hat{g}} \\
    &+\int_{M}\hat{v}_{N+2}^{\ell}h_{\ell i}(q)\hat{g}^{\alpha \beta}\displaystyle\sum_{\substack{S \in 2^{[N+1]}\\ 0 \leq |S| \leq N-2}} \partial_{\hat{S}}\Gamma_{jk}^{i}\sum_{\substack{T \in 2^{[N+1] \setminus S}\\ |T| \geq 1 \\ T \sqcup S \neq [N+1]}} \partial_{\alpha}\hat{v}_{T}^{j}
    \partial_{\beta}\hat{v}_{T^{c}}^{k} dV_{\hat{g}} \\
    =&\int_{M}\tilde{v}_{N+2}^{\ell}h_{\ell i}(q)F^{*}\hat{g}^{\alpha \beta}\displaystyle\sum_{\substack{S \in 2^{[N+1]}\\ |S| = N-1}} \partial_{\tilde{S}}\hat{\Gamma}_{jk}^{i}\sum_{\substack{T \in 2^{[N+1] \setminus S}\\ |T| \geq 1 \\ T \sqcup S \neq [N+1]}} \partial_{\alpha}\tilde{v}_{T}^{j}
    \partial_{\beta}\tilde{v}_{T^{c}}^{k} F^{*}dV_{\hat{g}} \\
    &+\int_{M}\tilde{v}_{N+2}^{\ell}h_{\ell i}(q)F^{*}\hat{g}^{\alpha \beta}\displaystyle\sum_{\substack{S \in 2^{[N+1]}\\ 0 \leq |S| \leq N-2}} \partial_{\tilde{S}}\Gamma_{jk}^{i}\sum_{\substack{T \in 2^{[N+1] \setminus S}\\ |T| \geq 1 \\ T \sqcup S \neq [N+1]}} \partial_{\alpha}\tilde{v}_{T}^{j}
    \partial_{\beta}\tilde{v}_{T^{c}}^{k} F^{*}dV_{\hat{g}} \\
    =&\int_{M}\tilde{v}_{N+2}^{\ell}h_{\ell i}(q)g^{\alpha \beta}\displaystyle\sum_{\substack{S \in 2^{[N+1]}\\ |S| = N-1}} \partial_{\tilde{S}}\hat{\Gamma}_{jk}^{i}\sum_{\substack{T \in 2^{[N+1] \setminus S}\\ |T| \geq 1 \\ T \sqcup S \neq [N+1]}} \partial_{\alpha}\tilde{v}_{T}^{j}
    \partial_{\beta}\tilde{v}_{T^{c}}^{k} dV_{g} \\
    &+\int_{M}\tilde{v}_{N+2}^{\ell}h_{\ell i}(q)g^{\alpha \beta}\displaystyle\sum_{\substack{S \in 2^{[N+1]}\\ 0 \leq |S| \leq N-2}} \partial_{\tilde{S}}\Gamma_{jk}^{i}\sum_{\substack{T \in 2^{[N+1] \setminus S}\\ |T| \geq 1 \\ T \sqcup S \neq [N+1]}} \partial_{\alpha}\tilde{v}_{T}^{j}
    \partial_{\beta}\tilde{v}_{T^{c}}^{k} dV_{g} \\
    =& \int_{M}v_{N+2}^{\ell}h_{\ell i}(q)g^{\alpha \beta}\displaystyle\sum_{\substack{S \in 2^{[N+1]}\\ |S| = N-1}} \partial_{S}\hat{\Gamma}_{jk}^{i}\sum_{\substack{T \in 2^{[N+1] \setminus S}\\ |T| \geq 1 \\ T \sqcup S \neq [N+1]}} \partial_{\alpha}v_{T}^{j}
    \partial_{\beta}v_{T^{c}}^{k} dV_{g} \\
    &+\int_{M}v_{N+2}^{\ell}h_{\ell i}(q)g^{\alpha \beta}\displaystyle\sum_{\substack{S \in 2^{[N+1]}\\ 0 \leq |S| \leq N-2}} \partial_{S}\Gamma_{jk}^{i}\sum_{\substack{T \in 2^{[N+1] \setminus S}\\ |T| \geq 1 \\ T \sqcup S \neq [N+1]}} \partial_{\alpha}v_{T}^{j}
    \partial_{\beta}v_{T^{c}}^{k} dV_{g},
\end{align*}
where in the first equality we used that the $(N+1)$th derivative of the DN maps coincide, that $h(q)=\hat{h}(q)$, and Lemma \ref{lemma:N_int_id} for $N+1$; in the second we separated the sum, then that the derivatives of Christoffel's symbols agree up to order $N-2$, and the notation from \eqref{eq:S=hatS}; in the third equality we used the notation from \eqref{eq:S=tildeS}; in the fourth we used \eqref{eq:rel_metrics_2} together with \eqref{eq:rel_vols_2} if $\dim M=2$, and \eqref{eq:rel_metrics_an} together with \eqref{eq:rel_vols_n} in the higher dimensional and analytic case; and finally in the last equality we used that the linearizations of $u$ and $F^{*}\hat{u}$ up to order $N$ coincide. In the CTA case, the same proof works taking $F=Id_{M}$. If we cancel the repeated terms, which are the ones on the sum with $0 \leq |S| \leq N-2$, we obtain
\[ \int_{M}v_{N+2}^{\ell}h_{\ell i}(q)g^{\alpha \beta}\displaystyle\sum_{\substack{S \in 2^{[N+1]}\\ |S|= N-1}} (\partial_{A}\Gamma_{jk}^{i}-\partial_{A}\hat{\Gamma}_{jk}^{i})v_{S}^{A} \sum_{\substack{T \in 2^{[N+1] \setminus S}\\ |T| \geq 1 \\ T \sqcup S \neq [N+1]}} \partial_{\alpha}v_{T}^{j}
    \partial_{\beta}v_{T^{c}}^{k} dV_{g}=0, \]
which is equivalent to \eqref{eq:gen_al}.
\end{proof}

\begin{lemma} \label{lemma:ind_der_chris}
Let $N \geq 2$. If the linearizations up to order $N-1$ of $u$ and $F^{*}\hat{u}$ coincide, and the derivatives of Christoffel's symbols of $h$ and $\hat{h}$ at $q$ coincide up to order $N-2$, then the derivatives of Christoffel's symbols of $h$ and $\hat{h}$ at $q$ coincide up to order $N-1$. Again, the first hypothesis in the CTA means that the linearizations up to order $N-1$ of $u$ and $\hat{u}$ are the same.
\end{lemma}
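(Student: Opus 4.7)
The plan is to combine the two preceding lemmas with a careful choice of test boundary data, following the template of Lemmas \ref{lemma:same_chris} and \ref{lemma:same_chris_der1}. The hypotheses of the present lemma are exactly those of Lemma \ref{lemma:rel_N_lin}, so I first invoke that lemma to upgrade the coincidence of the linearizations of $u$ and $F^{*}\hat{u}$ from order $N-1$ to order $N$. With this in hand, the hypotheses of Lemma \ref{lemma:general_ale} are met, yielding the integral identity
\begin{align*}
\int_{M} v_{\mu_{N+2}}^{\ell}\, h_{\ell i}(q)\, g^{\alpha \beta}\, \partial_{A_{1} \ldots A_{N-1}}(\Gamma_{jk}^{i}-\hat{\Gamma}_{jk}^{i})(q) \sum_{\mathrm{cyc}} \left( \prod_{s=1}^{N-1} v_{\mu_s}^{A_{s}}\right) \partial_{\alpha} v_{\mu_{N}}^{j}\, \partial_{\beta} v_{\mu_{N+1}}^{k}\, dV_{g}=0.
\end{align*}

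Next, I would fix arbitrary indices $i_0, j_0, k_0, A_1^0, \ldots, A_{N-1}^0 \in \{1,\ldots,n\}$ and design boundary data to isolate the component $\partial_{A_1^0 \ldots A_{N-1}^0}(\Gamma_{j_0 k_0}^i - \hat{\Gamma}_{j_0 k_0}^i)$. Concretely, for $s = 1, \ldots, N-1$ let $f_{\mu_s}^A$ equal a positive constant $C$ if $A = A_s^0$ and $0$ otherwise; let $f_{\mu_N}^j$ and $f_{\mu_{N+1}}^k$ equal a fixed non-constant $a \in C^{\infty}(\partial M)$ when $j = j_0$ resp.\ $k = k_0$ and $0$ otherwise; and let $f_{\mu_{N+2}}^m = C$ if $m = i_0$, $0$ otherwise. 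By uniqueness of the Dirichlet problem for $\Delta_g$ (\cite{GT01}*{Theorem 6.15}), each $v_{\mu_s}^A$ for $s \leq N-1$ and each $v_{\mu_{N+2}}^m$ is constant ($C$ or $0$), while $v_{\mu_N}^{j_0}$ and $v_{\mu_{N+1}}^{k_0}$ are both the non-constant harmonic extension of $a$.

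With these substitutions the integrand collapses. Any permutation in the cyclic sum that moves one of the "constant" labels $\mu_s$, $s \leq N-1$, into a derivative slot $\mu_N$ or $\mu_{N+1}$ kills that term because the derivative of a constant vanishes. The surviving permutations all contribute the same expression, since $\partial_{A_1 \ldots A_{N-1}} \Gamma_{jk}^i$ is symmetric in $A_1, \ldots, A_{N-1}$. The identity thus reduces to
\begin{align*}
\kappa\, C^{N}\, h_{i_0 i}(q)\, \bigl(\partial_{A_1^0 \ldots A_{N-1}^0}\Gamma_{j_0 k_0}^{i}(q) - \partial_{A_1^0 \ldots A_{N-1}^0}\hat{\Gamma}_{j_0 k_0}^{i}(q)\bigr) \int_{M} |dv_{\mu_N}^{j_0}|_g^{2}\, dV_g = 0,
\end{align*}
for some positive combinatorial constant $\kappa$. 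Since $a$ is non-constant, the Dirichlet integral is strictly positive; arguing by sign as in Lemma \ref{lemma:same_chris_der1}, one concludes that $h_{i_0 i}(q)(\partial_{A_1^0 \ldots A_{N-1}^0}\Gamma_{j_0 k_0}^{i} - \partial_{A_1^0 \ldots A_{N-1}^0}\hat{\Gamma}_{j_0 k_0}^{i})(q) = 0$. Since $i_0$ is arbitrary and $h(q)$ is invertible, one inverts $h$ to obtain $\partial_{A_1^0 \ldots A_{N-1}^0}\Gamma_{j_0 k_0}^{i}(q) = \partial_{A_1^0 \ldots A_{N-1}^0}\hat{\Gamma}_{j_0 k_0}^{i}(q)$. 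Arbitrariness of the remaining indices finishes the argument, and the same proof carries through in the CTA case with $F = \mathrm{Id}_M$.

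The main obstacle is the combinatorial bookkeeping in the cyclic sum: one must verify that the chosen boundary data produces no spurious surviving terms that would involve other components of $\partial^{N-1}(\Gamma - \hat{\Gamma})$. This rests on the two structural observations above, namely that any cyclic permutation placing a constant $v_{\mu_s}$ in a derivative slot annihilates the term, and that the equality of mixed partials ensures the remaining permutations all contribute the single desired component with a positive weight.
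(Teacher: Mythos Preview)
Your proposal is correct and follows essentially the same approach as the paper: invoke Lemma~\ref{lemma:rel_N_lin} to upgrade the coincidence of linearizations to order $N$, then apply Lemma~\ref{lemma:general_ale} with the same choice of constant/non-constant boundary data to isolate a single component of $\partial^{N-1}(\Gamma-\hat{\Gamma})(q)$ and argue via the positivity of the Dirichlet integral. In fact you are more explicit than the paper about why the cyclic sum collapses to a single term---the paper simply writes down the reduced identity without commenting on the vanishing of the other summands.
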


\begin{proof}
We already proved this for $N=2$ in Section \ref{sub:second_der}. Hence, take $N \geq 3$. By Lemma \ref{lemma:rel_N_lin}, we have that the linearizations up to order $N$ are equal. Hence, Lemma \ref{lemma:general_ale} is valid. Fix $A_{1}^{0},\ldots,A_{N-1}^{0},j_{0},k_{0},\ell_{0} \in \{1,\ldots,n\}$. Let $a \in C^{\infty}(\partial M)$ to be non-constant, let $C$ be a positive constant. Choose 
\[ f_{\mu_{s}}^{A_{s}}=\begin{cases}
    C & \text{if } A_{s}=A_{s}^{0}, \\
    0 & \text{in other case},
\end{cases} \quad f_{N}^{j}=\begin{cases}
    a & \text{if } j=j_{0}, \\
    0 & \text{in other case},
\end{cases}\]
\[ f_{N+1}^{k}=\begin{cases}
    a & \text{if } k=k_{0}, \\
    0 & \text{in other case},
\end{cases} \quad f_{N+2}^{\ell}=\begin{cases}
    C & \text{if } \ell=\ell_{0}, \\
    0 & \text{in other case},
\end{cases} \]
where $s \in \{1,\ldots,N-1\}$. Then, by uniqueness of solutions to elliptic Dirichlet problems (\cite{GT01}*{Theorem 6.15}), we obtain that
\[ v_{\mu_{s}}^{A_{s}}=v_{\mu_{N}}^{j}=v_{\mu_{N+1}}^{k}=v_{N+2}^{\ell}=0,\]
$A_{s} \in \{1,\ldots,n \} \setminus \{A_{s}^{0}\}$ for $s \in \{1,\ldots,N-1\}$, $j \in \{1,\ldots,n \} \setminus \{j_{0}\}$, $k \in \{1,\ldots,n \} \setminus \{k_{0}\}$, and $\ell \in \{1,\ldots,n \} \setminus \{\ell_{0}\}$, respectively, and
\[ v_{N+2}^{\ell_{0}}=v_{\mu_{s}}^{A_{s}^{0}}=C, \qquad v_{N}^{j_{0}}=v_{N+1}^{k_{0}}. \]
for $s \in \{1,\ldots,N-1\}$. Then, Lemma \ref{lemma:general_ale} implies
\[ C^{N}h_{\ell_{0}i}(q)\partial_{A_{1}^{0} \ldots A_{N-1}^{0}}(\Gamma_{j_{0}k_{0}}^{i}(q)-\hat{\Gamma}_{j_{0}k_{0}}^{i}(q))\int_{M}|dv_{N}^{j_{0}}|_{g}^{2}dV_{g}=0. \]
The fact that $a$ is non-constant gives
\[ h_{\ell_{0}i}(q)\partial_{A_{1}^{0} \ldots A_{N-1}^{0}}(\Gamma_{j_{0}k_{0}}^{i}(q)-\hat{\Gamma}_{j_{0}k_{0}}^{i}(q))=0. \]
Since we can repeat the same proof with any $\ell_{0}$, and $h$ is invertible, we conclude that 
\[ \partial_{A_{1}^{0} \ldots A_{N-1}^{0}}\Gamma_{j_{0}k_{0}}^{i}(q)=\partial_{A_{1}^{0} \ldots A_{N-1}^{0}}\hat{\Gamma}_{j_{0}k_{0}}^{i}(q). \]
Finally, as $A_{1}^{0},\ldots,A_{N-1}^{0},j_{0},k_{0}$ were arbitrary, we conclude that the derivatives of the Christoffel's symbols are the same.    
\end{proof}

\begin{prop} \label{prop:jets}
In the setting of Theorem \ref{thm:main}, the jets of $h$ and $\hat
h$ coincide at $q$.
\end{prop}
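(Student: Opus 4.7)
The plan is to proceed by strong induction on $N\geq 0$, showing simultaneously that the following three statements hold at each stage:
\begin{enumerate}
    \item[(i)] The linearizations of $u$ and $F^{*}\hat{u}$ coincide up to order $N$ (with $F=\mathrm{Id}_{M}$ in the CTA case so that the linearizations of $u$ and $\hat{u}$ coincide).
    \item[(ii)] The derivatives of $\Gamma_{jk}^{i}$ and $\hat{\Gamma}_{jk}^{i}$ at $q$ coincide up to order $N-1$.
    \item[(iii)] The derivatives of $h$ and $\hat{h}$ at $q$ coincide up to order $N$.
\end{enumerate}

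For the base case $N=0$, (i) is empty, (ii) is vacuous, and (iii) holds by the standing hypothesis $h(q)=\hat{h}(q)$. The cases $N=1$ and $N=2$ are precisely the content of Sections \ref{sub:first_der} and \ref{sub:second_der}: equations \eqref{eq:rel_first_lin}--\eqref{eq:rel_first_lin_CTA} and \eqref{eq:rel_second_lin}--\eqref{eq:rel_second_lin_CTA} yield (i); Lemmas \ref{lemma:same_chris} and \ref{lemma:same_chris_der1} yield (ii); and Propositions \ref{prop:first_der}, \ref{prop:second_der} yield (iii).

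For the inductive step, suppose the three statements hold through order $N-1$, with $N\geq 3$. First, applying Lemma \ref{lemma:rel_N_lin} with the inductive hypotheses (i) at level $N-1$ and (ii) at level $N-2$ (the latter being a weaker consequence of (ii) at level $N-1$), we upgrade (i) to order $N$. Second, with (i) at order $N-1$ and (ii) at order $N-2$ in hand, Lemma \ref{lemma:ind_der_chris} upgrades (ii) to order $N-1$. Third, Lemma \ref{lemma:ind_der} expresses the $N$-th derivative of $h$ at $q$ as a universal expression in the derivatives of $h$ up to order $N-1$ and the derivatives of $\Gamma$ up to order $N-1$; since both sets of data agree for $h$ and $\hat{h}$ by the previous two conclusions, the $N$-th derivative of $h$ at $q$ equals that of $\hat{h}$, which is (iii) at level $N$.

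Since the induction runs for all $N\in\mathbb{N}$, conclusion (iii) gives that every partial derivative of $h$ at $q$ coincides with the corresponding partial derivative of $\hat{h}$, i.e.\ the $C^{\infty}$-jets of $h$ and $\hat{h}$ at $q$ are equal. The only subtlety is bookkeeping: one must verify that at each $N$, the Christoffel derivatives needed in Lemmas \ref{lemma:rel_N_lin} and \ref{lemma:ind_der_chris} (namely up to order $N-2$) are already provided by the inductive hypothesis, which is immediate from (ii) at level $N-1$. This is the only delicate part, and it is handled by the careful ordering above: (i) at order $N$ uses (ii) at order $N-2$, while (ii) at order $N-1$ uses (i) at order $N-1$, so the two upgrades are not circular.
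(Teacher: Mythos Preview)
Your argument is correct and follows essentially the same inductive scheme as the paper's own proof, only packaged more explicitly as a simultaneous strong induction on the three statements (i)--(iii). The paper carries out the same bootstrap---alternating Lemma~\ref{lemma:rel_N_lin}, Lemma~\ref{lemma:ind_der_chris}, and Lemma~\ref{lemma:ind_der}---but narrates it step by step rather than stating a single inductive package; your version is a clean reformulation of that argument.
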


\begin{proof}
Consider  an initial condition of the form \eqref{eq:boundary_value} with $N_{0}+1=N+2$. By hypothesis and propositions \ref{prop:first_der} and \ref{prop:second_der}, we already have that the derivatives up to order $2$ at $q$ of the metrics coincide. We will focus on the higher derivatives. By lemmas \ref{lemma:same_chris} and \ref{lemma:same_chris_der1}, we have that the derivatives up to order $1$ at $q$ of the Christoffel's symbols are the same. Hence, in light of Lemma \ref{lemma:ind_der}, to obtain the equality for the third derivative of the metrics, we need to show that the second derivatives of the Christoffel's symbols are same at $q$. However, this is implied by Lemma \ref{lemma:ind_der_chris}. Note that we can apply Lemma \ref{lemma:rel_N_lin} and obtain equality of the linearizations up to order $3$. Then, we can use again Lemma \ref{lemma:ind_der_chris} to obtain that derivative up to order $3$ the Christoffel's symbols are the same at $q$. Then Lemma \ref{lemma:ind_der} shows that the we have that the derivatives of order $4$ at $q$ are the same. Inductively, we obtain that the derivatives of the Christoffel's symbols up to order $N-1$ at $q$, and of the metrics up to order $N-1$ at $q$ are the same. Hence, Lemma \ref{lemma:ind_der} implies that we have equality of all the derivatives of the metrics up to order $N$. To obtain the equality of the equality of the derivatives of the metric of order $N+1$ at $q$, we repeat the exact same proof, but we now consider \eqref{eq:boundary_value} with $N+3$. In this way, we are able to show that all the derivatives of the metric at $q$ are equal.    
\end{proof}

\begin{proof}[Proof of Theorem \ref{thm:main}]
    It follows from Lemma \ref{lemma:metrics_domain} and Proposition \ref{prop:jets}.
\end{proof}

\section{Energy Rigidity} \label{sec:er}

Finally, we deal with Corollary \ref{cor:er}. This result will be a consequence of the following lemma:

\begin{lemma} \label{lemma:e_to_dn}
Consider the setting of Corollary \ref{cor:er}. Let $u_{q,f}$ the harmonic map with boundary value $q+f$, where $q+f \in B_{r}(q)$. Then, the boundary $\partial M$, the metrics $g|_{\partial M}$, $h|_{B_{r}(q)}$, and the Dirichlet energies $E(u_{q+f})$ for all $f$ with small norm, determine $\Lambda_{g,h}(q+f)$ for all such $f$. Furthermore, Fréchet derivatives of the map $q+f \mapsto E(u_{q+f})$ of the order $k+1$ at $f=0$ determine the Fréchet derivatives of the map $q+f \mapsto \Lambda_{g,h}(q+f)$ at $f=0$ to the order $k$.
\end{lemma}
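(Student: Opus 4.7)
The plan is to derive the Dirichlet-to-Neumann map from the first variation of the Dirichlet energy, then to iterate the resulting identity to handle the higher-derivative statement. Fix $\phi \in C^{\infty}(\partial M,\R^{n})$ and $s \in \R$ small enough that $q+f+s\phi \in B_{r}(q)$. By Theorem \ref{thm:wp}, the map $s \mapsto u_{q+f+s\phi}$ is smooth, so $v := \partial_{s}u_{q+f+s\phi}|_{s=0}$ exists and satisfies $v|_{\partial M}=\phi$. Applying the standard first variation formula for \eqref{eq:energy} and using that the tension field of $u_{q+f}$ vanishes, one obtains
\begin{equation} \label{eq:fv_energy}
\frac{d}{ds}E(u_{q+f+s\phi})\bigg|_{s=0} = \int_{\partial M} \langle \phi, \partial_{\eta}u_{q+f}\rangle_{h}\,dS_{g} = \int_{\partial M} h_{ij}(q+f)\,\phi^{i}\,\Lambda_{g,h}(q+f)^{j}\,dS_{g}.
\end{equation}

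The left-hand side of \eqref{eq:fv_energy} is the first Fréchet derivative of $f \mapsto E(u_{q+f})$ at $f$ applied to $\phi$, hence it is determined by the knowledge of the energies on a neighbourhood of $f$. On the right-hand side, $h(q+f)$ is known (since $h|_{B_{r}(q)}$ is given) and is positive definite, while $dS_{g}$ is determined by $g|_{\partial M}$ and $\partial M$. Letting $\phi$ vary and applying the fundamental lemma of the calculus of variations together with the invertibility of $h(q+f)$, identity \eqref{eq:fv_energy} recovers $\Lambda_{g,h}(q+f)$ pointwise on $\partial M$ for every admissible $f$; this proves the first assertion.

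For the statement about Fréchet derivatives, set $\mathcal{E}(f):=E(u_{q+f})$ and $\widetilde{\Lambda}(f):=\Lambda_{g,h}(q+f)$. Differentiating \eqref{eq:fv_energy} $k$ additional times in $f$ at $f=0$ along directions $\phi_{1},\ldots,\phi_{k}$ and using Leibniz gives
\[
d^{k+1}\mathcal{E}|_{0}(\phi,\phi_{1},\ldots,\phi_{k}) = \sum_{I\sqcup J = \{1,\ldots,k\}} \int_{\partial M} (D^{|I|}h_{ij})(q)[\phi_{I}]\,\phi^{i}\,(D^{|J|}\widetilde{\Lambda}|_{0}[\phi_{J}])^{j}\,dS_{g},
\]
where $\phi_{I}$ denotes the tuple $(\phi_{i})_{i \in I}$. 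All factors involving $h$ and its derivatives at $q$ are known, and by induction on $k$ (the base case $k=0$ being the first assertion) all terms with $|J|<k$ are already determined from lower-order Fréchet derivatives of $\mathcal{E}$ at $0$. The identity therefore isolates the contribution from $J=\{1,\ldots,k\}$, namely $\int_{\partial M} h_{ij}(q)\phi^{i}(D^{k}\widetilde{\Lambda}|_{0}[\phi_{1},\ldots,\phi_{k}])^{j}\,dS_{g}$, and varying $\phi$ together with the invertibility of $h(q)$ recovers $D^{k}\widetilde{\Lambda}|_{0}$ as a symmetric $k$-linear form. The key step is the first variation formula \eqref{eq:fv_energy}; the rest is a straightforward induction via Leibniz and non-degeneracy of the boundary pairing with $h(q)$.
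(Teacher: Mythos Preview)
Your proof is correct and follows essentially the same approach as the paper: both use the first variation of the Dirichlet energy at a harmonic map to obtain the boundary identity \eqref{eq:fv_energy}, then recover $\Lambda_{g,h}(q+f)$ by varying the test function and inverting $h$, and finally differentiate this identity to handle higher-order Fr\'echet derivatives. The only cosmetic differences are that the paper writes out the integration-by-parts computation leading to \eqref{eq:fv_energy} explicitly while you invoke the first variation formula as standard, and your inductive treatment of the higher derivatives via Leibniz is spelled out more carefully than the paper's brief ``and so on''.
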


\begin{proof}
Take $\varphi \in C^{\infty}(\partial M,\R^{n})$. Let $u=u_{q+f}$ and $u_{t}=u_{q+f+t\varphi}$ be the harmonic maps with boundary value given by the lower indices and homotopic to them. Denote by $W$ the variational field of $u_{t}$, that is, $W=\frac{d}{dt}|_{t=0}u_{t}$. Then,
\begin{align*}
    \frac{d}{dt}\bigg|_{t=0}E(u_{t})=&\frac{d}{dt}\bigg|_{t=0}\frac{1}{2}\int_{M} g^{\alpha \beta}h_{ij}(u)\partial_{\alpha}u_{t}^{i}\partial_{\beta}u_{t}^{j}dV_{g} \\
    =& \frac{1}{2} \int_{M} g^{\alpha \beta} \partial_{k}h_{ij}(u) W^{k}\partial_{\alpha}u^{i}\partial_{\beta}u^{j}dV_{g} \\
    &+\int_{M}g^{\alpha \beta}h_{ij}(u)\partial_{\alpha}u^{i}\partial_{\beta}W^{j}dV_{g} \\
    =& \frac{1}{2} \int_{M} g^{\alpha \beta} \partial_{k}h_{ij}(u) W^{k}\partial_{\alpha}u^{i}\partial_{\beta}u^{j}dV_{g} \\
    &-\int_{M}(h_{ij}\Delta_{g}u^{i}+g^{\alpha \beta}\partial_{k}h_{ij}(u)\partial_{\alpha}u^{i}\partial_{\beta}u^{k})W^{j}dV_{g} \\
    &+\int_{\partial M} W^{j}h_{ij}(u) \partial_{\eta}u^{i} dS_{g}, \\
    =&\int_{\partial M} W^{j}h_{ij}(u) \partial_{\eta}u^{i} dS_{g},
\end{align*}
where in the third equality we used that integration by parts, and in the fourth we used that $u_{q,f}$ is an harmonic map. In addition we have $W|_{\partial M}=\varphi$. Therefore, we conclude 
\begin{equation} \label{eq:variation}
    \frac{d}{dt}\bigg|_{t=0}E(u)=\int_{\partial M} \varphi^{j}h_{ij}(q+f) \partial_{\eta}u_{q,f}^{i} dS_{g}.
\end{equation}
Since $\partial M$, $g_{\partial M}$, and $h|_{B_{r}(q)}$ are known, and $\varphi$ is arbitrary, we conclude from \eqref{eq:variation} that $\frac{d}{dt}|_{t=0}E(u)$ (with varying values of $\varphi$) determines $\Lambda_{g,h}(q+f)$. 

Regarding higher derivatives, by taking the derivative to 
\begin{equation} \label{eq:dn_fre}
    h_{ij}(q+f)\partial_{\eta}u(q+f)
\end{equation}
we can determine the first derivative of the DN map. Using this new information and taking the second derivative in \eqref{eq:dn_fre}, we can determine the second derivative of the DN map, and so on.
\end{proof}

\begin{rmk}
Note that on the proof of Theorem \ref{thm:main}, we only used the linearizations of the DN for harmonic maps and we obtained the jet of $h$ at $q$, and from equations \eqref{eq:variation}, \eqref{eq:dn_fre}, we see that these linearizations depend on $\partial M$, $g|_{\partial M}$, and the jet of $h$ at $q$ only. Hence, assuming the knowledge of $\partial M$ and $g|_{\partial M}$, we see that knowing the linearizations of $\Lambda_{g,h}$ and the jet of $h$ at $q$ determine each other.
\end{rmk}

\begin{bibdiv} 
\begin{biblist}

\bib{ABN20}{article}{
   author={Alexakis, Spyros},
   author={Balehowsky, Tracey},
   author={Nachman, Adrian},
   title={Determining a Riemannian metric from minimal areas},
   journal={Adv. Math.},
   volume={366},
   date={2020},
   pages={107025, 71},
   issn={0001-8708},
   review={\MR{4072788}},
   doi={10.1016/j.aim.2020.107025},
}

\bib{az}{article}{
   author={Assylbekov, Yernat M.},
   author={Zhou, Hanming},
   title={Boundary and scattering rigidity problems in the presence of a magnetic field and a potential},
   journal={Inverse Probl. Imaging},
   volume={9},
   date={2015},
   number={4},
   pages={935--950},
   issn={1930-8337},
   review={\MR{3461698}},
   doi={10.3934/ipi.2015.9.935},
}

\bib{calderon}{article}{
   author={Calder\'on, Alberto-P.},
   title={On an inverse boundary value problem},
   conference={
      title={Seminar on Numerical Analysis and its Applications to Continuum
      Physics},
      address={Rio de Janeiro},
      date={1980},
   },
   book={
      publisher={Soc. Brasil. Mat., Rio de Janeiro},
   },
   date={1980},
   pages={65--73},
   review={\MR{0590275}},
}

\bib{cllz}{article}{
   author={C\^{a}rstea, C\u{a}t\u{a}lin I.},
   author={Lassas, Matti},
   author={Liimatainen, Tony},
   author={Tzou, Leo},
   title={An inverse problem for general minimal surfaces},
   date={2023},
   eprint={2310.14268},
   status={preprint},
}

\bib{clz}{article}{
   author={C\^{a}rstea, C\u{a}t\u{a}lin I.},
   author={Liimatainen, Tony},
   author={Tzou, Leo},
   title={The Calderón problem on Riemannian surfaces and of minimal surfaces},
   date={2024},
   eprint={2406.16944},
   status={preprint},
}

\bib{cllo24}{article}{
   author={C\^{a}rstea, C\u{a}t\u{a}lin I.},
   author={Lassas, Matti},
   author={Liimatainen, Tony},
   author={Oksanen, Lauri},
   title={An inverse problem for the Riemannian minimal surface equation},
   journal={J. Differential Equations},
   volume={379},
   date={2024},
   pages={626--648},
   issn={0022-0396},
   review={\MR{4660625}},
   doi={10.1016/j.jde.2023.10.039},
}

\bib{Cekic17}{article}{
   author={Ceki\'c, Mihajlo},
   title={Calder\'on problem for connections},
   journal={Comm. Partial Differential Equations},
   volume={42},
   date={2017},
   number={11},
   pages={1781--1836},
   issn={0360-5302},
   review={\MR{3764927}},
   doi={10.1080/03605302.2017.1390678},
}

\bib{Chien23}{article}{
   author={Chien, Chun-Kai Kevin},
   title={An inverse problem for fractional connection Laplacians},
   journal={J. Geom. Anal.},
   volume={33},
   date={2023},
   number={12},
   pages={Paper No. 375, 23},
   issn={1050-6926},
   review={\MR{4648642}},
   doi={10.1007/s12220-023-01426-3},
}

\bib{Covi24_survey}{article}{
   author={Covi, Giovanni},
   title={The inverse problem for the fractional conductivity equation: a survey},
   date={2024},
   eprint={2408.14200},
   status={preprint},
}

\bib{cgru}{article}{
   author={Covi, Giovanni},
   author={Ghosh, Tuhin},
   author={Rüland, Angkana},
   author={Uhlmann, Gunther},
   title={A Reduction of the Fractional Calderón Problem to the Local Calderón Problem by Means of the Caffarelli-Silvestre Extension},
   date={2023},
   eprint={2305.04227},
   status={preprint},
}

\bib{dpsu}{article}{
   author={Dairbekov, Nurlan S.},
   author={Paternain, Gabriel P.},
   author={Stefanov, Plamen},
   author={Uhlmann, Gunther},
   title={The boundary rigidity problem in the presence of a magnetic field},
   journal={Adv. Math.},
   volume={216},
   date={2007},
   number={2},
   pages={535--609},
   issn={0001-8708},
   review={\MR{2351370}},
   doi={10.1016/j.aim.2007.05.014},
}

\bib{dksu}{article}{
   author={Dos Santos Ferreira, David},
   author={Kenig, Carlos E.},
   author={Salo, Mikko},
   author={Uhlmann, Gunther},
   title={Limiting Carleman weights and anisotropic inverse problems},
   journal={Invent. Math.},
   volume={178},
   date={2009},
   number={1},
   pages={119--171},
   issn={0020-9910},
   review={\MR{2534094}},
   doi={10.1007/s00222-009-0196-4},
}

\bib{dsfkls16}{article}{
   author={Dos Santos Ferreira, David},
   author={Kurylev, Yaroslav},
   author={Lassas, Matti},
   author={Salo, Mikko},
   title={The Calder\'on problem in transversally anisotropic geometries},
   journal={J. Eur. Math. Soc. (JEMS)},
   volume={18},
   date={2016},
   number={11},
   pages={2579--2626},
   issn={1435-9855},
   review={\MR{3562352}},
   doi={10.4171/JEMS/649},
}

\bib{ES64}{article}{
   author={Eells, James, Jr.},
   author={Sampson, J. H.},
   title={Harmonic mappings of Riemannian manifolds},
   journal={Amer. J. Math.},
   volume={86},
   date={1964},
   pages={109--160},
   issn={0002-9327},
   review={\MR{0164306}},
   doi={10.2307/2373037},
}

\bib{ES95}{collection}{
   author={Eells, James},
   author={Lemaire, Luc},
   title={Two reports on harmonic maps},
   publisher={World Scientific Publishing Co., Inc., River Edge, NJ},
   date={1995},
   pages={xii+216},
   isbn={981-02-1466-9},
   review={\MR{1363513}},
   doi={10.1142/9789812832030},
}

\bib{fgku}{article}{
   author={Feizmohammadi, Ali},
   author={Ghosh, Tuhin},
   author={Krupchyk, Katya},
   author={Uhlmann, Gunther},
   title={Fractional anisotropic Calderón problem on closed Riemannian manifolds},
   journal={J. Differential Geom. (to appear)},
}

\bib{fku24}{article}{
   author={Feizmohammadi, Ali},
   author={Krupchyk, Katya},
   author={Uhlmann, Gunther},
   title={Calderón problem for fractional Schrödinger operators on closed Riemannian manifolds},
   date={2024},
   eprint={2407.16866},
   status={preprint},
}

\bib{grsu}{article}{
   author={Ghosh, Tuhin},
   author={R\"uland, Angkana},
   author={Salo, Mikko},
   author={Uhlmann, Gunther},
   title={Uniqueness and reconstruction for the fractional Calder\'on
   problem with a single measurement},
   journal={J. Funct. Anal.},
   volume={279},
   date={2020},
   number={1},
   pages={108505, 42},
   issn={0022-1236},
   review={\MR{4083776}},
   doi={10.1016/j.jfa.2020.108505},
}

\bib{GT01}{book}{
   author={Gilbarg, David},
   author={Trudinger, Neil S.},
   title={Elliptic partial differential equations of second order},
   series={Classics in Mathematics},
   note={Reprint of the 1998 edition},
   publisher={Springer-Verlag, Berlin},
   date={2001},
   pages={xiv+517},
   isbn={3-540-41160-7},
   review={\MR{1814364}},
}

\bib{GSB09}{article}{
   author={Guillarmou, Colin},
   author={S\'a{} Barreto, Ant\^onio},
   title={Inverse problems for Einstein manifolds},
   journal={Inverse Probl. Imaging},
   volume={3},
   date={2009},
   number={1},
   pages={1--15},
   issn={1930-8337},
   review={\MR{2558301}},
   doi={10.3934/ipi.2009.3.1},
}

\bib{gt11}{article}{
   author={Guillarmou, Colin},
   author={Tzou, Leo},
   title={Calder\'on inverse problem with partial data on Riemann surfaces},
   journal={Duke Math. J.},
   volume={158},
   date={2011},
   number={1},
   pages={83--120},
   issn={0012-7094},
   review={\MR{2794369}},
   doi={10.1215/00127094-1276310},
}

\bib{Hardy06}{article}{
   author={Hardy, Michael},
   title={Combinatorics of partial derivatives},
   journal={Electron. J. Combin.},
   volume={13},
   date={2006},
   number={1},
   pages={Research Paper 1, 13},
   review={\MR{2200529}},
   doi={10.37236/1027},
}

\bib{hamilton}{book}{
   author={Hamilton, Richard S.},
   title={Harmonic maps of manifolds with boundary},
   series={Lecture Notes in Mathematics},
   volume={Vol. 471},
   publisher={Springer-Verlag, Berlin-New York},
   date={1975},
   pages={i+168},
   review={\MR{0482822}},
}

\bib{hartman}{article}{
   author={Hartman, Philip},
   title={On homotopic harmonic maps},
   journal={Canadian J. Math.},
   volume={19},
   date={1967},
   pages={673--687},
   issn={0008-414X},
   review={\MR{0214004}},
   doi={10.4153/CJM-1967-062-6},
}

\bib{HW08}{article}{
   author={H\'elein, Fr\'ed\'eric},
   author={Wood, John C.},
   title={Harmonic maps},
   conference={
      title={Handbook of global analysis},
   },
   book={
      publisher={Elsevier Sci. B. V., Amsterdam},
   },
   isbn={978-0-444-52833-9},
   date={2008},
   pages={417--491, 1213},
   review={\MR{2389639}},
   doi={10.1016/B978-044452833-9.50009-7},
}

\bib{Jost17}{book}{
   author={Jost, J\"urgen},
   title={Riemannian geometry and geometric analysis},
   series={Universitext},
   edition={7},
   publisher={Springer, Cham},
   date={2017},
   pages={xiv+697},
   isbn={978-3-319-61859-3},
   isbn={978-3-319-61860-9},
   review={\MR{3726907}},
   doi={10.1007/978-3-319-61860-9},
}

\bib{kls22}{article}{
   author={Krupchyk, Katya},
   author={Liimatainen, Tony},
   author={Salo, Mikko},
   title={Linearized Calder\'on problem and exponentially accurate
   quasimodes for analytic manifolds},
   journal={Adv. Math.},
   volume={403},
   date={2022},
   pages={Paper No. 108362, 43},
   issn={0001-8708},
   review={\MR{4404032}},
   doi={10.1016/j.aim.2022.108362},
}

\bib{klu18}{article}{
   author={Kurylev, Yaroslav},
   author={Lassas, Matti},
   author={Uhlmann, Gunther},
   title={Inverse problems for Lorentzian manifolds and non-linear
   hyperbolic equations},
   journal={Invent. Math.},
   volume={212},
   date={2018},
   number={3},
   pages={781--857},
   issn={0020-9910},
   review={\MR{3802298}},
   doi={10.1007/s00222-017-0780-y},
}

\bib{lls}{article}{
   author={Lassas, Matti},
   author={Liimatainen, Tony},
   author={Salo, Mikko},
   title={The Calder\'on problem for the conformal Laplacian},
   journal={Comm. Anal. Geom.},
   volume={30},
   date={2022},
   number={5},
   pages={1121--1184},
   issn={1019-8385},
   review={\MR{4564032}},
   doi={10.4310/cag.2022.v30.n5.a6},
}

\bib{lu}{article}{
   author={Lassas, Matti},
   author={Uhlmann, Gunther},
   title={On determining a Riemannian manifold from the Dirichlet-to-Neumann
   map},
   language={English, with English and French summaries},
   journal={Ann. Sci. \'Ecole Norm. Sup. (4)},
   volume={34},
   date={2001},
   number={5},
   pages={771--787},
   issn={0012-9593},
   review={\MR{1862026}},
   doi={10.1016/S0012-9593(01)01076-X},
}

\bib{LW08}{book}{
   author={Lin, Fanghua},
   author={Wang, Changyou},
   title={The analysis of harmonic maps and their heat flows},
   publisher={World Scientific Publishing Co. Pte. Ltd., Hackensack, NJ},
   date={2008},
   pages={xii+267},
   isbn={978-981-277-952-6},
   isbn={981-277-952-3},
   review={\MR{2431658}},
   doi={10.1142/9789812779533},
}

\bib{michel}{article}{
   author={Michel, Ren\'{e}},
   title={Sur la rigidit\'{e} impos\'{e}e par la longueur des
   g\'{e}od\'{e}siques},
   language={French},
   journal={Invent. Math.},
   volume={65},
   date={1981/82},
   number={1},
   pages={71--83},
   issn={0020-9910},
   review={\MR{0636880}},
   doi={10.1007/BF01389295},
}

\bib{mt1}{article}{
   author={Mu\~noz-Thon, Sebasti\'an},
   title={The boundary and scattering rigidity problems for simple MP-systems},
   journal={Inverse Probl. Imaging},
   volume={18},
   date={2024},
   number={6},
   pages={1431-1446},
   doi={10.3934/ipi.2024021},
}

\bib{mt2}{article}{
   author={Mu\~noz-Thon, Sebasti\'an},
   title={The linearization of the boundary rigidity problem for MP-systems and generic local boundary rigidity},
   date={2024},
   eprint={2401.11570},
   status={preprint},
}

\bib{nurminen23}{article}{
   author={Nurminen, Janne},
   title={An inverse problem for the minimal surface equation},
   journal={Nonlinear Anal.},
   volume={227},
   date={2023},
   pages={Paper No. 113163, 19},
   issn={0362-546X},
   review={\MR{4503820}},
   doi={10.1016/j.na.2022.113163},
}

\bib{nurminen24}{article}{
   author={Nurminen, Janne},
   title={An inverse problem for the minimal surface equation in the presence of a Riemannian metric},
   journal={Nonlinearity},
   volume={37},
   date={2024},
   number={9},
   doi={10.1088/1361-6544/ad6949},
}

\bib{pu2005}{article}{
   author={Pestov, Leonid},
   author={Uhlmann, Gunther},
   title={Two dimensional compact simple Riemannian manifolds are boundary
   distance rigid},
   journal={Ann. of Math. (2)},
   volume={161},
   date={2005},
   number={2},
   pages={1093--1110},
   issn={0003-486X},
   review={\MR{2153407}},
   doi={10.4007/annals.2005.161.1093},
}

\bib{QU24}{article}{
   author={Quan, Hadrian},
   author={Uhlmann, Gunther},
   title={The Calder\'on problem for the fractional Dirac operator},
   journal={Math. Res. Lett.},
   volume={31},
   date={2024},
   number={1},
   pages={279--302},
   issn={1073-2780},
   review={\MR{4795991}},
}

\bib{ruland}{article}{
   author={Rüland, Angkana},
   title={Revisiting the Anisotropic Fractional Calderón Problem Using the Caffarelli-Silvestre Extension},
   date={2023},
   eprint={2309.00858},
   status={preprint},
}

\bib{su}{article}{
   author={Stefanov, Plamen},
   author={Uhlmann, Gunther},
   title={Boundary rigidity and stability for generic simple metrics},
   journal={J. Amer. Math. Soc.},
   volume={18},
   date={2005},
   number={4},
   pages={975--1003},
   issn={0894-0347},
   review={\MR{2163868}},
   doi={10.1090/S0894-0347-05-00494-7},
}

\bib{suv}{article}{
   author={Stefanov, Plamen},
   author={Uhlmann, Gunther},
   author={Vasy, Andr\'{a}s},
   title={Local and global boundary rigidity and the geodesic X-ray
   transform in the normal gauge},
   journal={Ann. of Math. (2)},
   volume={194},
   date={2021},
   number={1},
   pages={1--95},
   issn={0003-486X},
   review={\MR{4276284}},
   doi={10.4007/annals.2021.194.1.1},
}

\bib{SU91}{article}{
   author={Sylvester, John},
   author={Uhlmann, Gunther},
   title={Inverse problems in anisotropic media},
   conference={
      title={Inverse scattering and applications},
      address={Amherst, MA},
      date={1990},
   },
   book={
      series={Contemp. Math.},
      volume={122},
      publisher={Amer. Math. Soc., Providence, RI},
   },
   isbn={0-8218-5129-2},
   date={1991},
   pages={105--117},
   review={\MR{1135861}},
   doi={10.1090/conm/122/1135861},
}

\bib{gunther14}{article}{
   author={Uhlmann, Gunther},
   title={Inverse problems: seeing the unseen},
   journal={Bull. Math. Sci.},
   volume={4},
   date={2014},
   number={2},
   pages={209--279},
   issn={1664-3607},
   review={\MR{3228575}},
   doi={10.1007/s13373-014-0051-9},
}

\end{biblist}
\end{bibdiv}

\end{document}